\newcommand{\sysn}{\left\{\begin{array}{rcl}}
\newcommand{\sysk}{\end{array}\right.}
\newtheorem{theorem}{Theorem}[section]
\theoremstyle{example}
\newtheorem{example}[theorem]{Example}
\theoremstyle{definition}
\newtheorem{definition}[theorem]{Definition}
\newtheorem{corollary}[theorem]{Corollary}
\journal{Fundamenta Mathematicae}
\begin{document}

\begin{frontmatter}



\title{Variations of selective separability and  tightness in
function spaces with set open topologies}


\author[label1]{Alexander V. Osipov}

\ead{OAB@list.ru}

\tnotetext[label1]{ This work was supported by Act 211 Government
of the Russian Federation, contract ¹ 02.A03.21.0006.}

\author[label2]{Selma \"{O}z\c{c}a\u{g}}

 \ead{sozcag@hacettepe.edu.tr}

\address[label1]{Ural Federal
 University, Krasovskii Institute of Mathematics and Mechanics,
Ekaterinburg, Russia}

\address[label2]{Department of Mathematics, Hacettepe  University, Beytepe, Ankara, Turkey}

\begin{abstract}
We study tightness properties and selective versions of
separability in bitopological function spaces endowed with
set-open topologies.
\end{abstract}

\begin{keyword}

Selection principles \sep bitopology \sep selective separability
\sep set-open topology \sep $C$-compactness \sep submetrizable
\sep fan tightness \sep strong fan tightness \sep $T$-tightness
\sep $R$-separability \sep $M$-separability \sep $GN$-separability
\sep $H$-separability


\MSC 54C35 \sep 54C40  \sep 54D65 \sep 46E55

\end{keyword}

\end{frontmatter}



\section{Introduction}

\smallskip
In this paper we are mainly concerned with selective versions of
separability in bitopological  function spaces endowed with two
homogenous set-open topologies.

\smallskip
Variations of separability, stronger forms, weaker forms,
functional separability and similar properties have been
intensively studied by many mathematicians in the last several
decades. The selection versions of separability has recently
gained a particular attention and as a consequences many interesting results were
obtained.

\smallskip
Although the definition of the selection principles was given by Scheepers in 1996
the initial studies of the theory was based on the papers  by
Menger, Hurewicz, Rothberger and Sierpinski in
1920-1930, see~{\cite{hur,menger,roth}}.

\smallskip
Many topological properties can be  defined or characterized in terms
 of the following two classical selection principles given in a general form in ~{\cite{sc}}  as follows:

Let $\mathcal{A}$ and $\mathcal{B}$ be sets consisting of
families of subsets of an infinite set $X$. Then:

$S_{1}(\mathcal{A},\mathcal{B})$: for
each sequence $(A_{n}: n\in \mathbb{N})$ of elements of
$\mathcal{A}$ there is a sequence $(b_{n}: n\in\mathbb{N})$ such
that for each $n$, $b_{n}\in A_{n}$, and $\{b_{n}: n\in\mathbb{N}
\}$ is an element of $\mathcal{B}$.

$S_{fin}(\mathcal{A},\mathcal{B})$:
for each sequence $(A_{n}: n\in \mathbb{N})$ of elements of
$\mathcal{A}$ there is a sequence $(B_{n}: n\in\mathbb{N})$ of
finite sets such that for each $n$, $B_{n}\subseteq A_{n}$, and
$\bigcup_{n\in\mathbb{N}}B_{n}\in\mathcal{B}$.











\smallskip
These selection principles denoted by $S_{fin}(\mathcal{O},
\mathcal{O})$ and $S_{1}( \mathcal{O}, \mathcal{O})$  are called
Menger and Rothberger property where $\mathcal{O}$ is the family
of open covers of a topological space.

\smallskip
For the topological space $X$, let $\mathcal{D}$ denote the family
of dense subspaces of $X$. The selection principles
$S_{fin}(\mathcal{D}, \mathcal{D})$ and $S_{1}( \mathcal{D},
\mathcal{D})$ were introduced by Scheepers   in~{\cite{sch}}
and recently gained a great attention, see~{\cite{bbm,bbmt,bbms,gs}}.

\smallskip
In~{\cite{bbm}} the selection properties $S_{fin}(\mathcal{D},
\mathcal{D})$, $S_{1}(\mathcal{D}, \mathcal{D})$ and
$S_{1}(\mathcal{D}, \mathcal{D}^{gp})$ are called $M$-separability
(also called selective separability), $R$-separability and
$GN$-separability, respectively, while a bit modified property
$S_{fin}(\mathcal{D}, \mathcal{D}^{gp})$ is called
$H$-separability where "M-", "R-" and "H-" represent well known
Menger, Rothberger and Hurewicz properties.

\smallskip
It should be noted that very recently Tsaban and his co-authors
in~{\cite{tsaban2}} studied all properties
$S(\mathcal{A},\mathcal{B})$ for $S\in \{S_1, S_{fin}\}$ and
$\mathcal A,\mathcal B$ are combinations of open covers, dense
open families and dense sets.

\smallskip
The selection principle theory is firstly considered in bitopological spaces by
Ko\v cinac and \"{O}z\c{c}a\u{g} in  ~{\cite{kooz,kooz1}} and they
carried out a systematic study on selection principles  mainly
selective versions of separability in bitopological spaces,
particularly in the space $C(X)$ of all continuous real-valued
functions defined on a Tychonoff space~$X$, where $C(X)$ is
endowed with the topology $\tau_p$ of pointwise topology and the
compact-open topology $\tau_k$.

\smallskip
 In the following we investigate some properties of bitopological selective versions of separability
 in function spaces and the set-open topologies will be used as a main tool.

\smallskip
The set open topology on a family $\lambda$ of nonempty subsets of
the set $X$ is a generalization of the compact open topology and
of the topology of pointwise convergence. This notion was first
introduced by Arens and Dugundji in ~{\cite{ad}} and was widely
investigated by Osipov in ~{\cite{os2,os1,os3}}. In the next
section we recall some facts on the set open topologies.

\smallskip
For the background material on selection principles we refer
 to the survey papers ~{\cite{ko1,sc1,tsaban}}, for the undefined notions in function spaces,
 see~{\cite{arh}} . We will follow~{\cite{enge}} for topological terminology and notations.

\section{Main definitions and notation}

Recall that a subset $A$ of a space $X$ is called
 {\it $C$-compact subset of $X$} if, for any real-valued function~$f$ continuous on $X$,
 the set $f(A)$ is compact in ~${\mathbb{R}}$.

Let $X$ be a topological space. Then:

 $\Psi$ denotes the collection of all $\pi$-networks of closed $C$-compact subsets
 of the set~$X$ such that it is
closed under $C$-compact subsets of the set $X$ of its elements.

 A family $\lambda$ of $C$-compact subsets of $X$ is said to be
 closed under (hereditary with respect to) $C$-compact subsets if it satisfies
 the following condition: whenever $A\in \lambda$ and $B$ is
 a $C$-compact (in $X$) subset of $A$, then $B\in \lambda$ also.

Note that $p\in \Psi$ and $k\in \Psi$ where $p$ and $k$ are sets
all finite and compact subsets of $X$.

 We use the
following notation for various topological spaces
 with the underlying set $C(X)$:

 $C_{\lambda}(X)$ for the $\lambda$-open topology.

 The element of the standard subbase of the $\lambda$-open
 topology:

 $[F,\,U]=\{f\in C(X):\ f(F)\subseteq U\}$ where $F\in \lambda$ and $U$ is a open subset of $\mathbb{R}$.

Given a family $\lambda$ of non-empty subsets of $X$, let
$\lambda(C)=\{A\in \lambda$ :  for every $C$-compact subset $B$ of
the space $X$ with $B\subset A$, the set $[B,U]$ is open in
$C_{\lambda}(X)$ for any open set $U$ of the space $\mathbb{R}
\}$.

 Let $\lambda_m$ denote the maximal with respect to inclusion
 family, provided that $C_{\lambda_m}(X)=C_{\lambda}(X)$. Note that a family
 $\lambda_m$ is unique for each family $\lambda$.

Interest in studying the $\lambda$-open topology generated by a
Theorem 3.3 in ~{\cite{os2}} which characterizes some
topological-algebraic properties of the set-open topology.

The following theorem is a corollary of Theorem 3.3 in~{\cite{os2}}.

\begin{theorem}\label{th1.1} For a space $X$, the following statements are
equivalent.

\begin{enumerate}

\item  $C_{\lambda}(X)$ is a paratopological group.

\item  $C_{\lambda}(X)$ is a topological group.

\item  $C_{\lambda}(X)$ is a topological vector space.

\item $C_{\lambda}(X)$ is a locally convex topological vector
space.

\item $C_{\lambda}(X)$ is a topological ring.

\item $C_{\lambda}(X)$ is a topological algebra.

\item  $\lambda$ is a family of\, $C$-compact sets and
$\lambda=\lambda(C)$.

\item $\lambda_m$ is a family of\, $C$-compact sets and it is
hereditary with respect to $C$-compact subsets.

\end{enumerate}

\end{theorem}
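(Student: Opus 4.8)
The plan is to separate the purely algebraic implications, which are formal, from the analytic core, which I would import from Theorem~3.3 of \cite{os2}, and then to add the single genuinely new equivalence $(7)\Leftrightarrow(8)$. First I would dispose of the forgetful implications $(4)\Rightarrow(3)\Rightarrow(2)\Rightarrow(1)$ and $(6)\Rightarrow(5)$. Since the algebraic operations on $C(X)$ (pointwise addition, scalar multiplication, multiplication) are fixed once and for all, each of these passages merely discards structure: a locally convex topological vector space is a topological vector space, the additive group of a topological vector space is a topological group, a topological group is a paratopological group, and a topological algebra is a topological ring. None of these requires computation. Theorem~3.3 of \cite{os2} supplies the substantive converse content, characterizing when $C_{\lambda}(X)$ carries each of these algebraic structures and in particular yielding the equivalence of $(1)$--$(6)$ with the combinatorial condition $(7)$. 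Granting that, the only new task is $(7)\Leftrightarrow(8)$.

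For $(8)\Rightarrow(7)$ I would argue directly from the maximality of $\lambda_m$. Because $\lambda$ induces the topology $C_{\lambda}(X)$ and $\lambda_m$ is the largest family inducing that same topology, we have $\lambda\subseteq\lambda_m$; hence if every member of $\lambda_m$ is $C$-compact, so is every member of $\lambda$. For the equality $\lambda=\lambda(C)$, the inclusion $\lambda(C)\subseteq\lambda$ is definitional, so I only need $\lambda\subseteq\lambda(C)$. Given $A\in\lambda\subseteq\lambda_m$ and a $C$-compact $B\subset A$, heredity of $\lambda_m$ places $B$ in $\lambda_m$; but membership in $\lambda_m$ means precisely that $[B,U]$ is open in $C_{\lambda}(X)$ for every open $U\subseteq\mathbb{R}$, which is exactly the requirement for $A\in\lambda(C)$. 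Thus $\lambda=\lambda(C)$, and $(7)$ follows.

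The direction $(7)\Rightarrow(8)$ is where the work lies, and I expect it to be the main obstacle. The usable description of the maximal family is $A\in\lambda_m$ if and only if $[A,U]$ is open in $C_{\lambda}(X)$ for every open $U\subseteq\mathbb{R}$, so two things must be shown. First, that every $A\in\lambda_m$ is $C$-compact: assuming not, some continuous $f$ sends $A$ onto a non-compact, hence unbounded or non-closed, subset of $\mathbb{R}$, and I would exploit this together with the Tychonoff separation on $X$ to manufacture, for a suitably chosen bounded open $U$ with $f\in[A,U]$, functions lying in every basic $\lambda$-neighbourhood of $f$ (these neighbourhoods being built from the genuinely $C$-compact members of $\lambda$ guaranteed by $(7)$) yet escaping $[A,U]$, contradicting the openness forced by $A\in\lambda_m$. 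Second, that $\lambda_m$ is hereditary with respect to $C$-compact subsets; the most transparent route here is to identify $\lambda_m$ with the collection of all $C$-compact sets contained in a finite union of members of $\lambda$, for then $[A,U]=\bigcap_{i}[A_i,U]$ is a finite intersection of open sets whenever $A\subseteq\bigcup_i A_i$, and any $C$-compact $B\subset A\in\lambda_m$ satisfies $B\subseteq A\subseteq\bigcup_i A_i$ and so lies in $\lambda_m$ as well. The delicate point, and the crux of the whole argument, is the hard inclusion of this identification, namely that every member of $\lambda_m$ really is captured by a \emph{finite} subfamily of $\lambda$ in this way; this is exactly where the group structure furnished by Theorem~3.3 and the hypothesis $\lambda=\lambda(C)$ must be combined, since an arbitrary $A\in\lambda_m$ need not itself belong to $\lambda$ and so the condition $\lambda=\lambda(C)$ cannot be applied to $A$ directly.
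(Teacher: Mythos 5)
The first thing to note is that the paper supplies no proof of this theorem at all: it is stated as an immediate corollary of Theorem~3.3 of \cite{os2}, so your decomposition --- forgetful implications for free, the analytic equivalence of (1)--(6) with (7) imported from \cite{os2}, and $(7)\Leftrightarrow(8)$ as the only new content --- is a reasonable reading of what needs to be done, and your argument for $(8)\Rightarrow(7)$ (via $\lambda\subseteq\lambda_m$ and the description of $\lambda_m$ as $\{A:[A,U]\ \text{is open in}\ C_{\lambda}(X)\ \text{for all open}\ U\subseteq\mathbb{R}\}$) is correct, modulo the small observation that the union of all families inducing the topology $\tau_{\lambda}$ again induces it, which is what makes that description of $\lambda_m$ legitimate.

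The gap is in $(7)\Rightarrow(8)$. Your plan rests on identifying $\lambda_m$ with the collection of all $C$-compact sets contained in a finite union of members of $\lambda$, and you explicitly defer the hard inclusion of that identification (``the delicate point, and the crux of the whole argument''). As written this is not a proof: the claimed identification is neither established nor obviously true, and the preliminary step that every $A\in\lambda_m$ is $C$-compact is likewise only gestured at (note also that the theorem is stated for an arbitrary space $X$, so an appeal to ``Tychonoff separation'' is not available without comment). More importantly, the detour through finite unions is unnecessary. Since $C_{\lambda_m}(X)=C_{\lambda}(X)$ by definition of $\lambda_m$, condition (7) gives via the equivalence $(7)\Leftrightarrow(2)$ that $C_{\lambda_m}(X)$ is a topological group; applying the \emph{same} equivalence, now to the family $\lambda_m$ in place of $\lambda$, yields that $\lambda_m$ is a family of $C$-compact sets with $\lambda_m=\lambda_m(C)$. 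Heredity is then immediate from maximality: if $A\in\lambda_m$ and $B\subseteq A$ is $C$-compact, then $\lambda_m=\lambda_m(C)$ says $[B,U]$ is open in $C_{\lambda_m}(X)=C_{\lambda}(X)$ for every open $U$, so $B\in\lambda_m$ by your own characterization of $\lambda_m$. This closes $(7)\Rightarrow(8)$ without any structural description of $\lambda_m$, and is presumably why the paper regards the whole statement as a corollary of \cite{os2}.
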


So without loss of generality we can assume  that $C_{\lambda}(X)$
is a paratopological group (TVS , locally convex TVS) under the
usual operations of addition and multiplication (and
multiplication by scalars) iff $\lambda_{m}\in \Psi$.

Further, throughout the article, we assume $\lambda=\lambda_{m}\in
\Psi$.

In particular, if $\lambda=p$ ($\lambda=k$) then
$C_{\lambda}(X)=C_p(X)$ ($C_{\lambda}(X)=C_{k}(X)$), i.e.
$\lambda$-open topology coincide with the topology of pointwise
convergence (the compact-open topology).

Since $C_{\lambda}(X)$ is homogenous space we may always consider
the point $\textbf{0}$ when studying local properties of this
space.

We use the symbol $\Omega_{x}$ to denote the set $\{A\subset X:
x\in Cl_{\tau}(A)\setminus A \}$, where $(X,\tau)$ is a
topological space and $x\in X$.

 Since we are considering two topologies of $C(X)$ we shall
use the symbol $(\Omega_{0})^{\lambda}$ to denote $\Omega_{0}$ in
the space $C_{\lambda}(X)$ and the symbol $(\Omega_{0})^{\mu}$ to
denote $\Omega_{0}$ in the space $C_{\mu}(X)$ where
$\lambda,\mu\in \Psi$.

We will denote by $\tau_{\lambda}$ the $\lambda$-open topology on
$C(X)$ for $\lambda\in \Psi$.

Before we proceed let us recall some definitions, notations and
terminology which were used in~{\cite{kooz,kooz1}}.

Throughout this paper $(X,\tau_1, \tau_2)$ will be a bitopological
space (shortly bispace), i.e. the set $X$ endowed with two
topologies $\tau_1$ and $\tau_2$. For a subset $A$ of $X$,
$Cl_{i}$ will denote the closure of $A$ in $(X,\tau_i)$, $i=1,2$.

A subset $A$ of $X$ is bidense (shortly $d$-dense) in $(X,\tau_1,
\tau_2)$ if $A$ is dense in both $(X,\tau_1)$ and $(X,\tau_2)$.
$X$ is $d$-separable if there is a countable set $A$ which is
$d$-dense in $(X,\tau_1, \tau_2)$.

Denote by $\mathcal{D}_1$ and $\mathcal{D}_2$ the collections of
all dense subsets of $(X,\tau_1)$ and $(X,\tau_2)$, respectively.
We say that $X$ is:

$M_{\tau_i,\tau_j}$-separable $(i,j=1,2; i\neq j)$, if for each
sequence $(D_n: n\in \mathbb{N})$ of elements of $\mathcal{D}_i$
there are finite sets $F_n\subset D_n$, $n\in \mathbb{N}$, such
that $\bigcup_{n\in \mathbb{N}} F_{n}\in \mathcal{D}_j$, i.e. if
$S_{fin}(\mathcal{D}_i,\mathcal{D}_j)$ hold;

$R_{\tau_i,\tau_j}$-separable if
$S_{1}(\mathcal{D}_i,\mathcal{D}_j)$ hold;

$H_{\tau_i,\tau_j}$-separable if for each sequence $(D_n: n\in
\mathbb{N})$ of elements of $\mathcal{D}_i$ there are finite sets
$F_n\subset D_n$, $n\in \mathbb{N}$, such that each $\tau_j$-open
subset of $X$ intersects $F_n$ for all but finitely many $n$;

$GN_{\tau_i,\tau_j}$-separable if
$S_{1}(\mathcal{D}_i,\mathcal{D}^{gp}_j)$ hold.

Here $\mathcal{D}^{gp}_j$ is the collection of groupable dense
subsets of a space; a countable dense subset $D$ of a space $Z$ is
groupable if $D=\bigcup_{n\in \mathbb{N}} A_{n}$, each $A_n$
finite and each open set $U$ in $Z$ intersects all but finitely
many $A_n$ ~{\cite{mkm}}.

In case $\tau_1=\tau_2=\tau$, then these definitions coincide
with definitions of corresponding topological selective versions
of separability of $(X,\tau)$.

As mentioned in~{\cite{kooz}} we have the implications,
$GN_{\tau_i,\tau_j}$-separable $\Longrightarrow$
$R_{\tau_i,\tau_j}$-separable $\Longrightarrow$
$M_{\tau_i,\tau_j}$-separable, and $H_{\tau_i,\tau_j}$-separable
$\Longrightarrow$ $M_{\tau_i,\tau_j}$-separable.

\smallskip
The remaining notations can be found in~{\cite{enge,kooz,kooz1}}.

\section{The tightness-type properties}

In this section we give  some results on bitopological versions of
the tightness properties and its variations in function bispaces.
We also combine these results with bitopological selective
separability properties.

In analogy to the $(\tau_i,\tau_j)$-tightness in ~{\cite{kooz}}
$(\tau_{\lambda},\tau_{\mu})$-tightness was introduced by
replacing $\tau_i$ and $\tau_j$ with $\tau_{\lambda}$ and
$\tau_{\mu}$ topologies respectively.

The $(\tau_{\lambda},\tau_{\mu})$-tightness, $\lambda, \mu\in
\Psi$, of a bispace $(C(X),\tau_{\lambda},\tau_{\mu})$ is the
least infinite cardinal $\kappa$ such that whenever $A\subseteq
C(X)$ and $f\in Cl_{\tau_{\lambda}}(A)$, there is $B\subseteq A$
such that $\mid B \mid \leq \kappa$ and $f\in Cl_{\tau_{\mu}}(B)$.
\medskip

We recall that a subset $A$ of $X$ is called co-zero (or a
functional open) set if $X\setminus A$ is a zero set. We mean by a
zero set, a subset of $X$ that is complete preimage of zero for
certain function from $C(X)$.

\begin{definition}

A co-zero (functional open) family  $\mathbb{U}$ of $X$ is called
a $\lambda$-$f$-cover  if $X$ is not a member of $\mathbb{U}$ and
for each $A\in \lambda$ there is a $U\in \mathbb{U}$ such that
$A\subseteq U$.

\end{definition}
Note that $\lambda$-$f$-cover is the cover of $\bigcup \lambda$,
but it can not be the cover of $X$.
The symbol $\Lambda(\lambda)$ denotes  the collection of all
$\lambda$-$f$-covers for the family $\lambda$.

\begin{definition}
A space $X$ is $\lambda$-$\mu$-Lindel$\ddot{o}$f if for each
 $\mathcal{U}\in \Lambda(\lambda)$ there is a
$\mathcal{V}\subseteq \mathcal{U}$ such that $\mathcal{V}$ is
countable and $\mathcal{V}\in \Lambda(\mu)$.
\end{definition}

If $\lambda=\mu$ then we shall write it simply
$\lambda$-Lindel$\ddot{o}$f. Note that if $\lambda$=$k$ then
$\lambda$-Lindel$\ddot{o}$f is $k$-Lindel$\ddot{o}$f.

\begin{theorem} For a space $X$, $\lambda$, $\mu \in \Psi$ and $\mu\subseteq
\lambda$, the bispace $(C(X),\tau_{\lambda},\tau_{\mu})$ has
countable  $(\tau_{\lambda},\tau_{\mu})$-tightness if and only if
$X$ is $\lambda$-$\mu$-Lindel$\ddot{o}$f.
\end{theorem}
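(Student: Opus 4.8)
The plan is to exploit the homogeneity of both function bispaces. Since $\lambda=\lambda_m\in\Psi$ and $\mu=\mu_m\in\Psi$, Theorem~\ref{th1.1} makes both $C_{\lambda}(X)$ and $C_{\mu}(X)$ topological groups, so translation by $-f$ is a homeomorphism in each topology; hence in the definition of $(\tau_{\lambda},\tau_{\mu})$-tightness I may assume $f=\textbf{0}$. I would also record at the outset that $\lambda$ and $\mu$ are closed under finite unions: if $F_1,F_2\in\lambda_m$ then $[F_1\cup F_2,U]=[F_1,U]\cap[F_2,U]$ is $\tau_{\lambda}$-open, so maximality of $\lambda_m$ forces $F_1\cup F_2\in\lambda_m$. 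Consequently every basic neighborhood of $\textbf{0}$ reduces to a single $[F,(-\varepsilon,\varepsilon)]$ with $F\in\lambda$ (resp. $F\in\mu$), which is what lets the closure conditions translate cleanly into covering statements; the hypothesis $\mu\subseteq\lambda$ (equivalently $\tau_{\mu}\subseteq\tau_{\lambda}$) is what keeps the two $f$-cover notions compatible.

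For the implication ($\Leftarrow$), assume $X$ is $\lambda$-$\mu$-Lindel\"{o}f and take $A\subseteq C(X)$ with $\textbf{0}\in Cl_{\tau_{\lambda}}(A)$. For each $n$ I would form $\mathcal{U}_n=\{f^{-1}((-1/n,1/n)):f\in A\}$; each member is co-zero, and since $[F,(-1/n,1/n)]$ is a $\tau_{\lambda}$-neighborhood of $\textbf{0}$, the closure hypothesis gives, for every $F\in\lambda$, some $f\in A$ with $F\subseteq f^{-1}((-1/n,1/n))$, so $\mathcal{U}_n$ covers $\lambda$. If some $f\in A$ is within $1/n$ of $\textbf{0}$ uniformly for infinitely many $n$, those $f$ already form a countable $B$ with $\textbf{0}\in Cl_{\tau_{\mu}}(B)$; otherwise, for all large $n$ we have $X\notin\mathcal{U}_n$, so $\mathcal{U}_n\in\Lambda(\lambda)$. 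Applying $\lambda$-$\mu$-Lindel\"{o}fness to each such $\mathcal{U}_n$ yields a countable $\mu$-$f$-subcover $\mathcal{V}_n$; choosing one witnessing $f$ for each member of each $\mathcal{V}_n$ gives a countable $B\subseteq A$. Given $G\in\mu$ and $\varepsilon>0$, picking $n$ with $1/n<\varepsilon$ and $V=f^{-1}((-1/n,1/n))\in\mathcal{V}_n$ with $G\subseteq V$ shows $f\in B\cap[G,(-\varepsilon,\varepsilon)]$, so $\textbf{0}\in Cl_{\tau_{\mu}}(B)$.

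For ($\Rightarrow$), assume countable $(\tau_{\lambda},\tau_{\mu})$-tightness and let $\mathcal{U}\in\Lambda(\lambda)$. Here I would set $A=\{f\in C(X):f\equiv 1\ \mbox{on}\ X\setminus U\ \mbox{for some}\ U\in\mathcal{U}\}$ and prove $\textbf{0}\in Cl_{\tau_{\lambda}}(A)$. The crucial point is the function construction: given $F\in\lambda$ choose $U\in\mathcal{U}$ with $F\subseteq U$, and write $X\setminus U=g^{-1}(0)$ with $g\ge 0$ continuous; because $F$ is $C$-compact, $g(F)$ is a compact subset of $(0,\infty)$, so $g\ge\delta>0$ on $F$, and $f=\max(0,1-g/\delta)$ is continuous, equal to $1$ on $X\setminus U$ and to $0$ on $F$. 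Thus $f\in A\cap[F,(-\varepsilon,\varepsilon)]$, giving $\textbf{0}\in Cl_{\tau_{\lambda}}(A)$. Countable tightness then yields a countable $B\subseteq A$ with $\textbf{0}\in Cl_{\tau_{\mu}}(B)$; letting $U_f\in\mathcal{U}$ witness each $f\in B$ and $\mathcal{V}=\{U_f:f\in B\}$, the facts that $X\notin\mathcal{U}$ and that $[G,(-1,1)]$ meets $B$ (so some $f\in B$ has $|f|<1$ on $G$, forcing $G\subseteq U_f$ since $f\equiv 1$ off $U_f$) show $\mathcal{V}$ is the required countable $\mu$-$f$-cover.

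I expect the main obstacle to be the direction ($\Rightarrow$), and specifically the separation argument above: it is exactly the $C$-compactness of the members of $\lambda$ that upgrades the bare inclusion $F\subseteq U$ into a \emph{complete} separation of $F$ from the zero set $X\setminus U$, and this is why co-zero ($f$-)covers, rather than arbitrary open covers, are the correct notion in this theorem. The remaining delicate point is the bookkeeping around the requirement $X\notin\mathbb{U}$ in the definition of an $f$-cover, which I handle by the short case distinction on whether elements of $A$ approximate $\textbf{0}$ uniformly.
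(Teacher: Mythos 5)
Your proof is correct and follows essentially the same route as the paper's: the forward direction builds the same family of separating functions $f_{K,U}$ (you additionally justify their existence from $C$-compactness of $F$ and the co-zero structure of $U$, which the paper takes for granted), and the reverse direction uses the same covers $\mathcal{U}_n=\{f^{-1}((-1/n,1/n)):f\in A\}$ and countable $\mu$-$f$-subcovers. The only divergence is cosmetic: where the paper disposes of the degenerate case via pseudocompactness and metrizability of $C_{\lambda}(X)$ when $\{X\}\in\lambda$, you handle the possibility $X\in\mathcal{U}_n$ directly by extracting a uniformly convergent sequence, which is arguably cleaner.
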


\begin{proof}
$(\Rightarrow)$. Let $(C(X),\tau_{\lambda},\tau_{\mu})$ be
countable $(\tau_{\lambda},\tau_{\mu})$-tightness and
$\mathcal{U}\in \Lambda(\lambda)$. For each pair of a element $K$
of $\lambda$ and $U\in \mathcal{U}$, $K\subseteq U$ let $f_{K,U}$
be any continuous function from $X$ to $[0,1]$ such that
$f_{K,U}(K)\subseteq \{0\}$ and $f_{K,U}(X\setminus U)\subseteq
\{1\}$. Let $A=\{f_{K,U}: K\in \lambda, K\subseteq U\in
\mathcal{U} \}$. Then $\textbf{0}$ belongs to the closure of $A$
with respect to the $\tau_{\lambda}$ topology. Since
$(C(X),\tau_{\lambda},\tau_{\mu})$ has countable
$(\tau_{\lambda},\tau_{\mu})$-tightness there is a countable set
$B=\{ f_{K_n,U_n} : n\in \mathbb{N} \}$ such that $\textbf{0}$
belongs to the closure of $B$ with respect to the $\tau_{\mu}$
topology. We claim that $\{U_n : n\in \mathbb{N} \}\in
\Lambda(\mu)$. Let $F\in \mu$. From the fact that $\textbf{0}$
belongs to the closure of $B$ with respect to the $\tau_{\mu}$
topology it follows that there is an $i\in \mathbb{N}$ such that
$[F,(-1,1)]$ contains the function $f_{K_{i},U_{i}}$. Then
$F\subseteq U_{i}$. Overwise for some $x\in F$ one has $x\notin
U_{i}$ so that $f_{K_{i},U_{i}}(x)=1$, contradicting
$f_{K_{i},U_{i}}\in[F,(-1,1)]$.

$(\Leftarrow)$. Let $A$ be a set of $C(X)\setminus \{\textbf{0}\}$
the closure of which contain $\textbf{0}$, with respect to the
$\tau_{\lambda}$ topology. If $\{X\}\in \lambda$ ($X$ is
pseudocompact) then the $\tau_{\lambda}$ topology coincides with
the $C$-compact-open topology, so $C_{\lambda}(X)$ is metrizable
(Theorem 2.2 in ~{\cite{os3}}), thus first countable, which means
that we can find a sequence $(a_{n}: n\in \mathbb{N})$, converging
uniformly to $\textbf{0}$ so there is nothing to be proved.

 Let $\{X\}\notin \lambda$. For each $n\in \mathbb{N}$ and every set
$K\in \lambda$ the neighborhood $[K,(- 1/n, 1/n)]$ of $\textbf{0}$
intersects $A$, so there exists a continuous function $f_{K,n}\in
A$ such that $|f_{K,n}(x)|<1/n$ for each $x\in K$. Since $f_{K,n}$
is a continuous function there is a co-zero set $U_{K,n}$ such
that $f_{K,n}(U_{K,n})\subseteq (-1/n, 1/n)$. Let $\mathcal
U_{n}=\{U_{K,n}: K\in \lambda \}$.

 As for any subset $K\in \lambda$ we have $K\neq X$, it can easily
 be achieved that none of the sets $U_{K,n}$ above coincides with
 $X$. So for each $n$, $\mathcal U_{n}\in \Lambda(\lambda)$. Each $\mathcal
 U_{n}$ has countable $\mu$-$f$-cover $\mathcal{V}_{n}\subseteq
 \mathcal{U}_{n}$. Define $B=\{f_{K,n} : n\in \mathbb{N},
 U_{K,n}\in \mathcal{V}_{n} \}$. It is evident that $B\subseteq A$,
 $|B|\leq\aleph_{0}$, and $\textbf{0}$ is the closure of $B$ with respect to the $\tau_{\mu}$
topology. Therefore the bispace $(C(X),\tau_{\lambda},\tau_{\mu})$
has countable  $(\tau_{\lambda},\tau_{\mu})$-tightness.
\end{proof}

\begin{corollary}
The space $C_{\lambda}(X)$ has countable tightness if and only if
$X$ is $\lambda$-Lindel$\ddot{o}$f.
\end{corollary}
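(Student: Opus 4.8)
The plan is to obtain the corollary as the diagonal special case $\mu=\lambda$ of the preceding theorem. First I would check that this substitution is legitimate: by our standing assumption $\lambda\in\Psi$, and trivially $\lambda\subseteq\lambda$, so the pair $(\lambda,\lambda)$ satisfies the hypotheses $\lambda,\mu\in\Psi$ and $\mu\subseteq\lambda$ required to invoke the theorem.

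Next I would observe that the bitopological notion collapses to the monotopological one on the diagonal. Unwinding the definition of $(\tau_{\lambda},\tau_{\mu})$-tightness with $\mu=\lambda$, it is the least infinite cardinal $\kappa$ such that whenever $A\subseteq C(X)$ and $f\in Cl_{\tau_{\lambda}}(A)$ there is $B\subseteq A$ with $\mid B\mid\le\kappa$ and $f\in Cl_{\tau_{\lambda}}(B)$. This is \emph{verbatim} the definition of the tightness of the single space $C_{\lambda}(X)$. Hence the assertion that $(C(X),\tau_{\lambda},\tau_{\lambda})$ has countable $(\tau_{\lambda},\tau_{\lambda})$-tightness is literally the same statement as the assertion that $C_{\lambda}(X)$ has countable tightness.

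Finally, by the convention fixed immediately after the definition of $\lambda$-$\mu$-Lindel\"{o}f, the property $\lambda$-$\lambda$-Lindel\"{o}f is precisely $\lambda$-Lindel\"{o}f. Feeding $\mu=\lambda$ into the theorem therefore reads: $C_{\lambda}(X)$ has countable tightness if and only if $X$ is $\lambda$-Lindel\"{o}f, which is exactly the claim.

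The argument is thus a matter of matching definitions, and I expect no genuine obstacle. The only point that warrants a moment's care is confirming that the bitopological tightness reduces to the classical tightness when the two topologies agree; this is an immediate consequence of the definition, in parallel with the remark the paper already records for the selective-separability notions in the case $\tau_1=\tau_2=\tau$.
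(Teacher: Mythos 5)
Your proposal is correct and is exactly how the paper intends the corollary to be read: it is the diagonal case $\mu=\lambda$ of the preceding theorem, which the paper states without further proof. Your checks that $(\lambda,\lambda)$ satisfies the theorem's hypotheses and that both the bitopological tightness and the $\lambda$-$\mu$-Lindel\"{o}f property collapse to their classical counterparts are precisely the needed observations.
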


\begin{definition}
A space $X$ has countable $(\tau_{\lambda},\tau_{\mu})$-fan
tightness if for each $x\in X$ and each sequence $(A_{n}: n\in
\mathbb{N})$ of elements of $(\Omega_{x})^{\lambda}$ there exists
a sequence $(B_{n}: n\in \mathbb{N})$ of finite sets such that,
for each $n$, $B_{n}\subseteq A_{n}$ and $x\in
cl_{\tau_{\mu}}(\bigcup_{n\in \mathbb{N}}B_{n})$, i.e. if
$S_{fin}((\Omega_{x})^{\lambda},(\Omega_{x})^{\mu})$ holds for
each $x\in X$.
\end{definition}

\begin{definition}
A space $X$ has countable $(\tau_{\lambda},\tau_{\mu})$-strong fan
tightness if for each $x\in X$
$S_{1}((\Omega_{x})^{\lambda},(\Omega_{x})^{\mu}))$ holds.
\end{definition}

The proof of the next theorem follows much the similar lines as in the proof
of the theorem 2.4 in~{\cite{panpa}}.

\begin{theorem}
\label{th4.9}
 Let $X$ be a Tychonoff space,
$\lambda$, $\mu \in \Psi$ and $\mu\subseteq \lambda$. Then the
following are equivalent:

\begin{enumerate}

\item  $C(X)$ satisfies
$S_{1}((\Omega_{0})^{\lambda},(\Omega_{0})^{\mu}))$;

\item  $X$ has property $S_{1}(\Lambda(\lambda),\Lambda(\mu))$.

\end{enumerate}

\end{theorem}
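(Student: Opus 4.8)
The plan is to run the selection principle $S_1$ on both sides of the function--cover correspondence already used in the proof of the preceding theorem, writing basic neighbourhoods of $\textbf{0}$ in the form $[F,(-\epsilon,\epsilon)]$ with $F\in\mu$ (resp. $F\in\lambda$), which constitute a base at $\textbf{0}$ for $\tau_{\mu}$ (resp. $\tau_{\lambda}$). For $(1)\Rightarrow(2)$ I would start from a sequence $(\mathcal{U}_n)$ of $\lambda$-$f$-covers and, for each $n$ and each pair $K\subseteq U$ with $K\in\lambda$, $U\in\mathcal{U}_n$, choose a continuous $f^n_{K,U}:X\to[0,1]$ equal to $0$ on $K$ and to $1$ on $X\setminus U$; this is possible because for a generator $g$ of the cozero set $U$ the image $g(K)$ is a compact subset of $\mathbb{R}\setminus\{0\}$, hence bounded away from $0$. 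Setting $A_n=\{f^n_{K,U}\}$, the $\lambda$-$f$-cover property makes every $[K,(-\epsilon,\epsilon)]$ meet $A_n$, while each $f^n_{K,U}$ attains the value $1$ (since $U\neq X$), so $A_n\in(\Omega_0)^{\lambda}$. Applying $S_1((\Omega_0)^{\lambda},(\Omega_0)^{\mu})$ yields $f_n=f^n_{K_n,U_n}\in A_n$ with $\{f_n\}\in(\Omega_0)^{\mu}$. Then $\{U_n\}\in\Lambda(\mu)$: for $F\in\mu$ the neighbourhood $[F,(-1,1)]$ contains some $f_n$, so $f_n(F)\subseteq(-1,1)$, and if some $x\in F$ lay outside $U_n$ then $f_n(x)=1$, a contradiction; hence $F\subseteq U_n$. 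This direction is routine because the two-valued shape of $f^n_{K,U}$ makes $[F,(-1,1)]$ force containment directly.

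The content lies in $(2)\Rightarrow(1)$. First I would dispose of the degenerate case $\{X\}\in\lambda$ exactly as in the previous proof: then $\tau_{\lambda}$ is metrizable, hence first countable, and since $\mu\subseteq\lambda$ gives $\tau_{\mu}\subseteq\tau_{\lambda}$, a diagonal choice along a countable base at $\textbf{0}$ in $\tau_{\lambda}$ already produces an $S_1$-selection whose only cluster point is $\textbf{0}$, even in $\tau_{\mu}$. So assume $\{X\}\notin\lambda$ and let $(A_n)$ be a sequence of elements of $(\Omega_0)^{\lambda}$. Fix a bijection $\mathbb{N}\cong\mathbb{N}\times\mathbb{N}$ and rewrite the sequence as $(A_{m,j})$. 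For fixed $m$ and each $K\in\lambda$, the neighbourhood $[K,(-1/m,1/m)]$ meets $A_{m,j}$, so I pick $f^{m,j}_K\in A_{m,j}$ with $|f^{m,j}_K|<1/m$ on $K$ and take a cozero set $U^{m,j}_K$ with $f^{m,j}_K(U^{m,j}_K)\subseteq(-1/m,1/m)$ and $K\subseteq U^{m,j}_K$; as in the previous proof, using $\{X\}\notin\lambda$ one arranges $U^{m,j}_K\neq X$ by intersecting with a proper cozero neighbourhood of the closed $C$-compact set $K$. Thus $\mathcal{U}_{m,j}=\{U^{m,j}_K:K\in\lambda\}\in\Lambda(\lambda)$.

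Now I would apply $S_1(\Lambda(\lambda),\Lambda(\mu))$ \emph{separately for each fixed $m$} to the sequence $(\mathcal{U}_{m,j})_{j\in\mathbb{N}}$, obtaining selections $U^{m,j}_{K(m,j)}\in\mathcal{U}_{m,j}$ whose union $\mathcal{V}^{(m)}=\{U^{m,j}_{K(m,j)}:j\in\mathbb{N}\}$ belongs to $\Lambda(\mu)$, and set $f_{m,j}=f^{m,j}_{K(m,j)}\in A_{m,j}$. The combined family $\{f_{m,j}:(m,j)\in\mathbb{N}\times\mathbb{N}\}$ is the required selection, and it does not contain $\textbf{0}$ because each $A_{m,j}\in(\Omega_0)^{\lambda}$ omits $\textbf{0}$. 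For the closure condition, given a basic $\tau_{\mu}$-neighbourhood $[F,(-\epsilon,\epsilon)]$ of $\textbf{0}$, choose $m$ with $1/m<\epsilon$; since $\mathcal{V}^{(m)}\in\Lambda(\mu)$ there is $j$ with $F\subseteq U^{m,j}_{K(m,j)}$, whence $f_{m,j}(F)\subseteq(-1/m,1/m)\subseteq(-\epsilon,\epsilon)$, i.e. $f_{m,j}\in[F,(-\epsilon,\epsilon)]$. Therefore $\textbf{0}\in Cl_{\tau_{\mu}}\{f_{m,j}\}$ and $\{f_{m,j}\}\in(\Omega_0)^{\mu}$.

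The main obstacle is exactly the uniformity concealed in this final step. If one applied $S_1(\Lambda(\lambda),\Lambda(\mu))$ only once, using smallness level $1/n$ at stage $n$, the resulting $\mu$-$f$-cover might cover a given $F\in\mu$ solely by a low-level member, so the selected functions need not become arbitrarily small on $F$ and convergence to $\textbf{0}$ in $\tau_{\mu}$ would fail. Splitting $\mathbb{N}$ into infinitely many infinite blocks and freezing the smallness level $1/m$ on the $m$-th block is precisely what upgrades the bare $f$-cover property of each $\mathcal{V}^{(m)}$ into genuine $\tau_{\mu}$-clustering at $\textbf{0}$. A secondary, purely technical point inherited from the previous proof is guaranteeing that every cozero set $U^{m,j}_K$ can be taken strictly smaller than $X$, which is needed for each $\mathcal{U}_{m,j}$ to be a legitimate $\lambda$-$f$-cover.
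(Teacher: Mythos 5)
Your proposal is correct and follows essentially the same route as the paper: the same two-valued functions $f_{K,U}$ for $(1)\Rightarrow(2)$, and for $(2)\Rightarrow(1)$ the same reindexing of the sequence over $\mathbb{N}\times\mathbb{N}$, freezing the smallness level $1/m$ on each block and applying $S_{1}(\Lambda(\lambda),\Lambda(\mu))$ once per block to the covers $\mathcal{U}_{m,j}$. Your writeup is in fact slightly more careful than the paper's on two minor points it leaves implicit: the construction of $f_{K,U}$ from a generator of the cozero set $U$ when $K$ is only $C$-compact, and the handling of the case $\{X\}\in\lambda$, which the paper dismisses as clear.
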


\begin{proof}

$(1)\Rightarrow(2)$. Let $(\mathcal{U}_{n}: n\in \mathbb{N})$ be a
sequence of $\lambda$-$f$ covers of $X$. For each pair $A\in
\lambda$ and a co-zero set $U$ such that  $A\subseteq U$ let
$f_{A,U}:X \rightarrow [0,1]$ be a continuous function with
$f_{A,U}(A)\subseteq \{0\}$ and $f_{A,U}(X\setminus U)\subseteq
\{1\}$.

Now let  $B_{n}=\{ f_{A,U} : A\in \lambda, A\subseteq U\in
\mathcal{U}_{n}\}$. Now we claim that $\textbf{0}$  is in the
closure of each $B_n$, with respect to $\lambda$-open topology.

Indeed at first we have $\textbf{0}\notin B_n$ and secondly let
$\textbf{0}\in [A,V]$ where $A\in \lambda$ and $V$ is open subset
of $\mathbb R$.  There exists $U\in \mathcal U_n$ with $A\subseteq
U $. For the functions $f_{A,U}\in B_{n}$ $f_{A,U}\in [A, V]$ and
$f_{A,U}(A)=\{0\}\subseteq V$ hence $\textbf{0}$  is in the
closure of $B_n$ with respect to $\lambda$-open topology, which
means $B_n\in (\Omega_0)^\lambda$.

Now since $C(X)$ has countable $(\tau_\lambda,\tau_\mu)$ strong
fan tightness there is a sequence $(f_{A_{n},U_{n}}: n\in
\mathbb{N})$  $\forall n\in \mathbb N$ with  $A_{n}\in \lambda$,
$U_{n}\in \mathcal{U}_{n}$ such that $\textbf{0}$ belongs to the
closure of $\{f_{A_{n},U_{n}}: n\in \mathbb{N}\}$ with respect to
the $\mu$-open topology. To conclude the first part of the proof
now we claim that $\{U_{n}: n\in \mathbb{N}\}\in \Lambda(\mu)$.
Let $K\in \mu$. There is an $j\in \mathbb{N}$ such that
$[K,(-1,1)]$ contains the function $f_{A_{j},U{j}}$. Clearly
$K\subseteq U_{j}$.

\smallskip
$(2)\Rightarrow(1)$. Let $(B_{m}: m\in \mathbb{N})\in
{\Omega_0}^\lambda$.  If $\{X\}\in \lambda$ it is clear. Now let
$\{X\}\notin \lambda$. We set $B_{n,m}:=B_{j(n,m)}$ for the
bijection $j: \mathbb{N}^2 \mapsto \mathbb{N}$. For each $A\in
\lambda$ the neighborhood  $[A,(- 1/n, 1/n)]$ of $\textbf{0}$
intersects $B_{m,n}$, which means that there exists a continuous
function   $f_{A,m,n}\in B_{m,n}$ such that $|f_{A,m,n}(x)|<1/n$
for each $x\in A$  and    $f_{A,m,n}(U_{A,m,n})\subseteq (-1/n,
1/n)$ for the co-zero set $U_{A,m,n}$.

Now we set  $\mathcal U_{m,n}={U_{A,m,n}, A\in \lambda}$. For any
set $A\in \lambda$, $A\neq X$,none of the sets $U_{A,m,n}$ equals
to $X$ gives us. So for each $m$ and $n$, $\mathcal U_{m,n}\in
 \Lambda(\lambda)$. To each sequence $(\mathcal U_{m,n} : m\in
 \mathbb{N})$ apply the fact that $X$ is an
 $S_{1}(\Lambda(\lambda),\Lambda(\mu))$-space. We can easily
 obtain a sequence $(U_{A_{m,n}}: m\in \mathbb{N})$, such that
  $\{U_{A_{m,n}}: m\in \mathbb{N}\}\in \Lambda(\mu)$. Now define
  $D=\{f_{A_{m,n}} : n\in \mathbb{N},
 U_{A,m,n}\in\{U_{A_{m,n}}: m\in \mathbb{N}\} \}$. It is evident that $D\subseteq B_n$,
 $|D|\leq\aleph_{0}$, and $\textbf{0}$ is the closure of $D$ with respect to the $\tau_{\mu}$
topology. Therefore the bispace $(C(X),\tau_{\lambda},\tau_{\mu})$
satisfies $S_{1}((\Omega_{0})^{\lambda},(\Omega_{0})^{\mu}))$.
\end{proof}

\begin{corollary} For a Tychonoff space $X$ the following are equivalent:

\begin{enumerate}

\item  $C_{\lambda}(X)$ has countable strong fan tightness;

\item  $X$ has property
$S_{1}(\Lambda(\lambda),\Lambda(\lambda))$.

\end{enumerate}

\end{corollary}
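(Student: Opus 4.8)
The plan is to recognize this corollary as precisely the diagonal case $\mu=\lambda$ of Theorem~\ref{th4.9}, combined with the homogeneity of $C_{\lambda}(X)$ that was recorded in Section~2. First I would unwind the terminology. By the definition of countable $(\tau_{\lambda},\tau_{\mu})$-strong fan tightness, the (single-topology) statement that $C_{\lambda}(X)$ has countable strong fan tightness is exactly the assertion that $S_{1}((\Omega_{x})^{\lambda},(\Omega_{x})^{\lambda})$ holds for every $x\in C(X)$. So statement (1) of the corollary is the family-over-all-points form of the selection property, taken with the two topologies equal to $\tau_{\lambda}$.

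Next I would reduce the quantifier over all points to the single point $\mathbf{0}$. Under the standing assumption $\lambda=\lambda_{m}\in\Psi$, Theorem~\ref{th1.1} guarantees that $C_{\lambda}(X)$ is a (para)topological group, so each translation $g\mapsto g-x$ is a self-homeomorphism that is simultaneously $\tau_{\lambda}$-continuous with $\tau_{\lambda}$-continuous inverse. This map carries $(\Omega_{x})^{\lambda}$ bijectively onto $(\Omega_{0})^{\lambda}$ and preserves $\tau_{\lambda}$-closures, so a witnessing selection at $\mathbf{0}$ transports to a witnessing selection at an arbitrary $x$ and conversely. Hence the statement ``$S_{1}((\Omega_{x})^{\lambda},(\Omega_{x})^{\lambda})$ for all $x$'' is equivalent to its instance at $\mathbf{0}$, i.e.\ to ``$C(X)$ satisfies $S_{1}((\Omega_{0})^{\lambda},(\Omega_{0})^{\lambda})$.'' This is precisely statement (1) of Theorem~\ref{th4.9} in the case $\mu=\lambda$.

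Finally I would invoke Theorem~\ref{th4.9} directly. Since $\lambda\in\Psi$ and the inclusion $\lambda\subseteq\lambda$ holds trivially, the hypotheses of that theorem are met with $\mu$ taken to be $\lambda$. Its equivalence (1)$\Leftrightarrow$(2) then reads: $C(X)$ satisfies $S_{1}((\Omega_{0})^{\lambda},(\Omega_{0})^{\lambda})$ if and only if $X$ has property $S_{1}(\Lambda(\lambda),\Lambda(\lambda))$. Chaining this with the homogeneity reduction of the previous paragraph yields exactly the stated equivalence (1)$\Leftrightarrow$(2) of the corollary, completing the argument.

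I do not expect a genuine obstacle here, since the result is a specialization of an already-proved theorem; the only point that requires care is the homogeneity reduction, and this is fully justified by Theorem~\ref{th1.1} together with the remark in Section~2 that local properties of the homogeneous space $C_{\lambda}(X)$ may be tested at $\mathbf{0}$. One should merely confirm that the translation homeomorphism respects the set-open subbasic neighborhoods $[F,U]$, which it does because $f\in[F,U]$ iff $f-x\in[F,U-x]$, so the neighborhood filters at $x$ and at $\mathbf{0}$ correspond and the $\tau_{\lambda}$-closure relation is preserved.
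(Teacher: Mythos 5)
Your proposal is correct and takes essentially the same route the paper intends: the corollary is just Theorem~\ref{th4.9} specialized to $\mu=\lambda$ (the hypothesis $\mu\subseteq\lambda$ holding trivially), combined with the reduction to the point $\mathbf{0}$ via homogeneity that the paper records in Section~2 and that is justified by the topological-group structure from Theorem~\ref{th1.1}. One small nitpick: your closing identity ``$f\in[F,U]$ iff $f-x\in[F,U-x]$'' is not literally well formed, since $x$ is a function rather than a real number and the translate of a subbasic set $[F,U]$ need not again be subbasic; but this remark is redundant anyway, because openness of translates already follows from the group topology you invoked, so the gap is harmless.
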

In a similar way one can show that

\begin{theorem}
\label{thg}
 Let $X$ be a Tychonoff space,
$\lambda$, $\mu \in \Psi$ and $\mu\subseteq \lambda$. Then the
following are equivalent:

\begin{enumerate}

\item  $C(X)$ satisfies
$S_{fin}((\Omega_{0})^{\lambda},(\Omega_{0})^{\mu}))$;

\item  $X$ has property $S_{fin}(\Lambda(\lambda),\Lambda(\mu))$.

\end{enumerate}

\end{theorem}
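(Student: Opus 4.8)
The plan is to mirror the structure of the proof of Theorem~\ref{th4.9} almost verbatim, replacing the $S_1$-style single-function selection by the $S_{fin}$-style finite-set selection. The equivalence to be established is between the Menger-type selection principle $S_{fin}((\Omega_0)^\lambda,(\Omega_0)^\mu)$ in the function space and the combinatorial covering property $S_{fin}(\Lambda(\lambda),\Lambda(\mu))$ on $X$. As in the previous theorem, the bridge between the two worlds is the standard correspondence $U \mapsto f_{A,U}$ assigning to each co-zero set $U$ covering a member $A\in\lambda$ a continuous Urysohn-type function with $f_{A,U}(A)\subseteq\{0\}$ and $f_{A,U}(X\setminus U)\subseteq\{1\}$; this correspondence converts a $\lambda$-$f$-cover into a set whose closure contains $\textbf{0}$ in the $\lambda$-open topology, and conversely converts neighborhoods $[A,(-1/n,1/n)]$ of $\textbf{0}$ into co-zero sets forming $\lambda$-$f$-covers.

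For the direction $(1)\Rightarrow(2)$, I would take a sequence $(\mathcal{U}_n : n\in\mathbb{N})$ of $\lambda$-$f$-covers and form the sets $B_n=\{f_{A,U} : A\in\lambda,\ A\subseteq U\in\mathcal{U}_n\}$, verifying exactly as before that each $B_n\in(\Omega_0)^\lambda$. Applying $S_{fin}((\Omega_0)^\lambda,(\Omega_0)^\mu)$ now yields \emph{finite} sets $G_n\subseteq B_n$ with $\textbf{0}\in Cl_{\tau_\mu}(\bigcup_n G_n)$. Each $G_n$ corresponds to a finite collection $\mathcal{V}_n=\{U : f_{A,U}\in G_n\}\subseteq\mathcal{U}_n$, and the argument that $\bigcup_n\mathcal{V}_n\in\Lambda(\mu)$ is the same: for $K\in\mu$, the neighborhood $[K,(-1,1)]$ of $\textbf{0}$ meets $\bigcup_n G_n$, so some $f_{A,U}$ with $U\in\bigcup_n\mathcal{V}_n$ lies in $[K,(-1,1)]$, forcing $K\subseteq U$.

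For $(2)\Rightarrow(1)$, I would handle the trivial case $\{X\}\in\lambda$ (where $C_\lambda(X)$ is metrizable, hence the property is automatic) and then, given $(B_m : m\in\mathbb{N})\in(\Omega_0)^\lambda$, reindex via a bijection $j:\mathbb{N}^2\to\mathbb{N}$ to get the double sequence $B_{m,n}$. From the intersection of $[A,(-1/n,1/n)]$ with $B_{m,n}$ I extract functions $f_{A,m,n}$ and their associated co-zero sets $U_{A,m,n}$, assembling the $\lambda$-$f$-covers $\mathcal{U}_{m,n}$. Applying $S_{fin}(\Lambda(\lambda),\Lambda(\mu))$ to this sequence produces finite subfamilies whose union is a $\mu$-$f$-cover; pulling these back through the correspondence gives finite subsets of the $B_m$ whose union has $\textbf{0}$ in its $\tau_\mu$-closure, which is precisely $S_{fin}((\Omega_0)^\lambda,(\Omega_0)^\mu)$.

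The main obstacle, and the one point where genuine care is needed rather than mechanical copying, is the bookkeeping in the finite selection: in the $S_1$ case each stage contributes a single function and hence a single cover element, but in the $S_{fin}$ case each stage contributes a finite \emph{set}, so I must make sure that the finite subfamilies of co-zero sets extracted on the $X$-side match up consistently with the finite sets of functions selected on the $C(X)$-side under the (non-injective) map $f_{A,U}\mapsto U$. Concretely, the delicate step is arranging the indexing so that ``finite set of functions'' and ``finite subfamily of the cover'' correspond bijectively enough that the groupability-free union condition defining $\Lambda(\mu)$ transfers correctly; once this correspondence is set up, the verification that the resulting union lies in $\Lambda(\mu)$ (respectively that $\textbf{0}$ is in the $\tau_\mu$-closure of the union) is identical to the $S_1$ argument.
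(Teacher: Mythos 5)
Your proposal is correct and matches the paper exactly: the paper gives no separate proof of this theorem, stating only that it follows ``in a similar way'' from the proof of Theorem~\ref{th4.9}, and your outline is precisely that adaptation, with the $S_1$ selections replaced by finite selections on both sides of the correspondence $U\mapsto f_{A,U}$. The one bookkeeping point you flag should be resolved as in the paper's proof of Theorem~\ref{th4.9}: in the direction $(2)\Rightarrow(1)$ apply $S_{fin}(\Lambda(\lambda),\Lambda(\mu))$ separately, for each fixed $n$, to the sequence $(\mathcal{U}_{m,n}: m\in\mathbb{N})$, so that for a $\tau_{\mu}$-neighbourhood $[F,(-\varepsilon,\varepsilon)]$ one first picks $n$ with $1/n<\varepsilon$ and then finds the witnessing function in the $n$-th selected family.
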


\begin{corollary}
The space $C_{\lambda}(X)$ has countable fan tightness if and only
if $X$ has property $S_{fin}(\Lambda(\lambda),\Lambda(\lambda))$.
\end{corollary}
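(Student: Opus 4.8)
The plan is to obtain this corollary as the diagonal case $\mu=\lambda$ of Theorem~\ref{thg}, after reducing the global notion of countable fan tightness to the single test point $\textbf{0}$. First I would note that when $\mu=\lambda$ the inclusion hypothesis $\mu\subseteq\lambda$ of Theorem~\ref{thg} holds trivially, so that theorem applies and identifies the statement ``$C(X)$ satisfies $S_{fin}((\Omega_{0})^{\lambda},(\Omega_{0})^{\lambda})$'' with the statement ``$X$ has property $S_{fin}(\Lambda(\lambda),\Lambda(\lambda))$.'' Thus it only remains to match the left-hand condition with countable fan tightness of $C_{\lambda}(X)$.

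For this I would appeal to homogeneity. By the Definition preceding Theorem~\ref{th4.9}, countable $(\tau_{\lambda},\tau_{\mu})$-fan tightness requires $S_{fin}((\Omega_{x})^{\lambda},(\Omega_{x})^{\mu})$ at every point $x$; specializing $\mu=\lambda$, countable fan tightness of $C_{\lambda}(X)$ means precisely that $S_{fin}((\Omega_{x})^{\lambda},(\Omega_{x})^{\lambda})$ holds for each $x\in C(X)$. Under the standing assumption $\lambda=\lambda_{m}\in\Psi$, Theorem~\ref{th1.1} guarantees that $C_{\lambda}(X)$ is a topological group, hence homogeneous: for each $f$ the translation $g\mapsto g+f$ is a self-homeomorphism sending $\textbf{0}$ to $f$ and carrying $(\Omega_{0})^{\lambda}$ bijectively onto $(\Omega_{f})^{\lambda}$, where here the ``source'' and ``target'' topologies coincide. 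Since the selection principle $S_{fin}$ at a point is preserved by homeomorphisms, the fan tightness condition holds at every point as soon as it holds at $\textbf{0}$. Therefore countable fan tightness of $C_{\lambda}(X)$ is equivalent to $S_{fin}((\Omega_{0})^{\lambda},(\Omega_{0})^{\lambda})$, and combining this with Theorem~\ref{thg} yields the stated equivalence. The only step requiring any care is the homogeneity reduction, and this is already sanctioned by the remark in the preliminaries that local properties of $C_{\lambda}(X)$ may be studied at the point $\textbf{0}$; accordingly I anticipate no substantive obstacle.
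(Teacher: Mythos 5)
Your proof is correct and is exactly the argument the paper intends: the corollary is stated as an immediate consequence of Theorem~\ref{thg} with $\mu=\lambda$, with the reduction to the point $\textbf{0}$ left implicit via the paper's standing remark that $C_{\lambda}(X)$ is homogeneous (justified by Theorem~\ref{th1.1} under the assumption $\lambda=\lambda_{m}\in\Psi$). You have simply made that routine homogeneity step explicit, which is fine.
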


\medskip
The $T$ -tightness $T(X)$ of a space $X$ is the smallest infinite
cardinal $\tau$ such that if $\{F_{\alpha}: \alpha<\kappa\}$ is an
increasing family of closed subsets and $cf(\kappa)> \tau$, then
$\bigcup\{F_{\alpha}:\alpha<\kappa\}$ is closed in $X$. This
definition was introduced in  ~{\cite{uch}} by Juh$\acute{a}$sz .
Since the family $\{F_{\alpha}: \alpha<\kappa\}$ is increasing and
$cf(\kappa)$ is regular, we may say that the $T$ -tightness $T(X)$
is the smallest infinite cardinal $\tau$ such that if
$\{F_{\alpha}: \alpha<\kappa\}$ is an increasing family of closed
subsets and $\kappa$ is a regular cardinal greater than $\tau$,
then $\bigcup \{F_{\alpha}: \alpha<\kappa\}$ is closed in $X$. In
\cite{masa} the T-tightness of function spaces $C_p X)$ was
investigated and in \cite{ko} it was considered for $C_k (X)$.

\smallskip
The bitopological version of this notion  was
introduced in ~{\cite{kooz1}}.

\noindent
A bispace $(X,\tau_{\mu},\tau_{\lambda})$ has countable
$(\tau_{\mu},\tau_{\lambda})$ - $T$ - tightness, if for each
uncountable regular cardinal $\kappa$ and each increasing sequence
$(A_{\alpha}: \alpha<\kappa)$ of closed subsets of
$(X,\tau_{\mu})$ the set $\bigcup\{A_{\alpha}: \alpha<\kappa\}$ is
closed in $(X,\tau_{\lambda})$.

\begin{theorem}\label{th1} Let $X$ be a Tychonoff space,
$\lambda$, $\mu \in \Psi$ and $\mu\subseteq \lambda$. Then (1)
implies (2):

\begin{enumerate}

\item   $(C(X),\tau_{\mu},\tau_{\lambda})$  has  countable
$(\tau_{\lambda},\tau_{\mu})$ - $T$ - tightness;

\item  for each regular cardinal $\kappa$ and each increasing
sequence $\{\mathcal{U}_{\alpha}: \alpha<\rho \}$ of family of
cozero subsets of $X$ such that $\bigcup_{\alpha<\kappa}
\mathcal{U}_{\alpha}$ is a $\lambda$-$f$-cover of $X$ there is a
$\beta<\kappa$ so that $\mathcal{U}_{\beta}$ is a $\mu$-$f$-cover
of $X$.

\end{enumerate}

\end{theorem}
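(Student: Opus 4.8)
The plan is to convert statement (2) into a statement about closures in $C(X)$ and then feed the resulting family into the $T$-tightness hypothesis (1). Fix an uncountable regular cardinal $\kappa$ and an increasing sequence $\{\mathcal U_\alpha:\alpha<\kappa\}$ of families of cozero sets whose union $\mathcal U=\bigcup_{\alpha<\kappa}\mathcal U_\alpha$ is a $\lambda$-$f$-cover of $X$; if $X\in\lambda$ there are no $\lambda$-$f$-covers at all and (2) is vacuous, so assume $X\notin\lambda$. For each pair $(A,U)$ with $A\in\lambda$ and $A\subseteq U\in\mathcal U$, use the Tychonoff property to pick a continuous $f_{A,U}\colon X\to[0,1]$ with $f_{A,U}(A)\subseteq\{0\}$ and $f_{A,U}(X\setminus U)\subseteq\{1\}$, just as in the proof of Theorem \ref{th4.9}. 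Put $B_\alpha=\{f_{A,U}:A\in\lambda,\ A\subseteq U\in\mathcal U_\alpha\}$. Since the $\mathcal U_\alpha$ increase with $\alpha$, so do the sets $B_\alpha$, and $\bigcup_{\alpha<\kappa}B_\alpha=B:=\{f_{A,U}:A\subseteq U\in\mathcal U\}$.

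First I would verify that $\textbf{0}\in Cl_{\tau_\lambda}(B)$. Under the standing assumption $\lambda=\lambda_m$ the family $\lambda$ is closed under finite unions — from $[A_1\cup A_2,V]=[A_1,V]\cap[A_2,V]$ and the maximality of $\lambda_m$ — so a basic $\tau_\lambda$-neighborhood of $\textbf{0}$ may be taken as a single $[A,V]$ with $A\in\lambda$ and $0\in V$ open. As $\mathcal U$ is a $\lambda$-$f$-cover there is $U\in\mathcal U$ with $A\subseteq U$, and then $f_{A,U}\in B$ satisfies $f_{A,U}(A)=\{0\}\subseteq V$, so $[A,V]\cap B\neq\emptyset$. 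Hence $\textbf{0}$ lies in the $\tau_\lambda$-closure of $B=\bigcup_\alpha B_\alpha$.

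Next set $F_\alpha=Cl_{\tau_\mu}(B_\alpha)$. The $F_\alpha$ form an increasing sequence of $\tau_\mu$-closed subsets of $C(X)$, so (1) — the countable $T$-tightness of the bispace $(C(X),\tau_\mu,\tau_\lambda)$, read as: an increasing family of $\tau_\mu$-closed sets has $\tau_\lambda$-closed union — applies and tells us that $\bigcup_{\alpha<\kappa}F_\alpha$ is $\tau_\lambda$-closed. Since $B\subseteq\bigcup_\alpha F_\alpha$ and $\textbf{0}\in Cl_{\tau_\lambda}(B)$, we conclude $\textbf{0}\in\bigcup_{\alpha<\kappa}F_\alpha$, that is, $\textbf{0}\in F_\beta=Cl_{\tau_\mu}(B_\beta)$ for some $\beta<\kappa$.

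Finally I would extract the covering conclusion from $\textbf{0}\in Cl_{\tau_\mu}(B_\beta)$. For $K\in\mu$ the set $[K,(-1,1)]$ is a $\tau_\mu$-neighborhood of $\textbf{0}$, so it meets $B_\beta$: there is $f_{A,U}\in B_\beta$ with $f_{A,U}(K)\subseteq(-1,1)$. Because $f_{A,U}\equiv 1$ on $X\setminus U$, no point of $K$ can belong to $X\setminus U$; thus $K\subseteq U\in\mathcal U_\beta$, and moreover $X\notin\mathcal U_\beta\subseteq\mathcal U$. Hence $\mathcal U_\beta$ is a $\mu$-$f$-cover, which is exactly (2). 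The step I expect to be most delicate is the correct placement of the topologies in the $T$-tightness hypothesis — the $\tau_\mu$-closed sets $F_\alpha$ must sit on the first coordinate so that the union comes out $\tau_\lambda$-closed — together with the side cases ($X\in\lambda$, handled above, and $\kappa=\omega$, where the uncountable $T$-tightness gives no information, so the claim is to be understood for uncountable regular $\kappa$).
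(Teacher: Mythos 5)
Your proof is correct and follows essentially the same route as the paper's: the same Urysohn-type functions $f_{A,U}$, the same increasing sets $B_\alpha$ indexed by $\alpha<\kappa$, the same density argument placing $\mathbf{0}$ in the $\tau_\lambda$-closure of the union, and the same extraction of the $\mu$-$f$-cover from $\mathbf{0}\in Cl_{\tau_\mu}(B_\beta)$. One caveat on the step you yourself flagged as delicate: the paper's convention for $(\tau_\lambda,\tau_\mu)$-$T$-tightness is the reverse of your gloss --- it takes an increasing family of $\tau_\lambda$-closed sets and concludes the union is $\tau_\mu$-closed (the paper accordingly forms $\tau_\lambda$-closures $Cl_{\tau_\lambda}(A_\alpha)$, not $\tau_\mu$-closures) --- but since $\mu\subseteq\lambda$ gives $\tau_\mu\subseteq\tau_\lambda$, your sets $F_\alpha=Cl_{\tau_\mu}(B_\alpha)$ are also $\tau_\lambda$-closed and the version you invoke is a formal consequence of the paper's, so the argument goes through unchanged.
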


\begin{proof}

$(1)\Rightarrow(2)$. Since a bispace
$(C(X),\tau_{\mu},\tau_{\lambda})$ has countable
$(\tau_{\lambda},\tau_{\mu})$-$T$-tightness
  for each uncountable regular cardinal
$\kappa$ and each increasing sequence $ \{ A_{\alpha} : \alpha<
\kappa \}$ of closed subsets of $(C(X),\tau_{\lambda})$ the set
$\bigcup \{ A_{\alpha} : \alpha< \kappa \}$ is closed in
$(C(X),\tau_{\mu})$. Let for regular cardinal $\kappa$ and
increasing sequence $\{\mathcal{U}_{\alpha}: \alpha<\rho \}$ of
family of cozero subsets of $X$ the family
$\bigcup_{\alpha<\kappa} \mathcal{U}_{\alpha}$ be a
$\lambda$-$f$-cover of $X$.  Now we set $\mathcal{U}_{\alpha,K}:=\{ \mathcal{U}\in
\mathcal{U}_{\alpha} : K\subset \mathcal{U} \}$  for each  $\alpha<\kappa$ and
$K\in \lambda$. For each
$\mathcal{U}\in \mathcal{U}_{\alpha,K}$ let $f_{K,\mathcal{U}}$ be
a continuous function from $X$ into $[0,1]$ such that
$f_{K,\mathcal{U}}(K)=\{0\}$ and $f_{K,\mathcal{U}}(X\setminus
\mathcal{U})=\{1\}$. Now consider the set  $A_{\alpha}=\{f_{K,\mathcal{U}} :
\mathcal{U}\in \mathcal{U}_{\alpha,K} \}$, $\alpha< \kappa$.

By (1) we observe that the set   $A=\bigcup_{\alpha< \kappa}
cl_{\tau_{\lambda}}(A_{\alpha})$ is closed in $(C(X),\tau_{\mu})$.

Let $<\textbf{0},K,\varepsilon>:=[K,(-\varepsilon, \varepsilon)]$
be a standard basic $\tau_{\lambda}$-neighborhood of $\textbf{0}$.
There exist $\alpha< \kappa$ and $\mathcal{U}\in
\mathcal{U}_{\alpha}$ with $K\subset \mathcal{U}$. Then
$\mathcal{U}\in \mathcal{U}_{\alpha,K}$, hence by construction
there is $f\in A_{\alpha}\bigcap <\textbf{0},K,\varepsilon>$.
Therefore, each $\tau_{\lambda}$-neighborhood of $\textbf{0}$
intersects some $A_{\alpha}$, $\alpha<\kappa$, i.e. $\textbf{0}$
belongs to the $\tau_{\lambda}$-closure of the set
$\bigcup_{\alpha< \kappa} A_{\alpha}$ which is actually the set
$A$. It follows that there is $\beta< \kappa$ with $\textbf{0}\in
cl_{\tau_{\mu}}(A_{\beta})$. We claim that the corresponding
family $\mathcal{U}_{\beta}$ is an $\mu$-$f$-cover of $X$.

Let $F\in \mu$. Then the $\tau_{\mu}$-neighborhood
$<\textbf{0},F,1>$ of $\textbf{0}$ intersects $A_{\beta}$; let
$f_{F,\mathcal{U}}\in A_{\beta}\bigcap <\textbf{0},F,1>$. Then
$f_{F,\mathcal{U}}(X\setminus\mathcal{U})=1$ and thus
$F\subset\mathcal{U}\in\mathcal{U}_{\beta}$. Then $\mathcal{U}_{\beta}$ is an
$\mu$-$f$-cover of $X$.

\end{proof}
The following example show that the condition (2) may not involve
the condition (1).

\bigskip

{\bf Example 1.}  Let $X=\omega_1+1$ be the space $\{\alpha :
\alpha\leq \omega_1 \}$ with the order topology, $\mu=p$,
$\lambda=k$. Consider bitopological space
$(C(X),\tau_{p},\tau_{k})$. Note that $C_{p}(X)$ has countable
$T$-tightness (Theorem 2.3 in ~{\cite{masa}})  and hence
 for each regular cardinal $\kappa$ and each increasing
sequence $\{\mathcal{U}_{\alpha}: \alpha<\rho \}$ of family of
open subsets of $X$ such that $\bigcup_{\alpha<\kappa}
\mathcal{U}_{\alpha}$ is a $\omega$-cover of $X$ there is a
$\beta<\kappa$ so that $\mathcal{U}_{\beta}$ is a $\omega$-cover
of $X$. Thus $(C(X),\tau_{p},\tau_{k})$ has property that for each
regular cardinal $\kappa$ and each increasing sequence
$\{\mathcal{U}_{\alpha}: \alpha<\rho \}$ of family of cozero
subsets of $X$ such that $\bigcup_{\alpha<\kappa}
\mathcal{U}_{\alpha}$ is a $k$-cover of $X$ there is a
$\beta<\kappa$ so that $\mathcal{U}_{\beta}$ is a $\omega$-cover
of $X$. Consider  a set

$F_{\beta}=C(X)\setminus \{h\in C(X): |h(x)|<\frac{1}{2}$ for $
x\leq \beta$ and $|h(x)|<1$ for $x> \beta \}$

\noindent
where $\beta$ is a limit ordinal in $X$. Note that $F_{\beta}$ is
closed set in $C_{k}(X)$, but it is not closed set in $C_{p}(X)$.
So $F=\bigcup_{\beta\leq\omega_1} F_{\beta}=F_{\omega_1}$. For
$\textbf{0}$ we have $\textbf{0}\in cl_{\tau_p} F \setminus F$. It
follows that bitopological space $(C(X),\tau_{p},\tau_{k})$ has
not countable $(\tau_{k},\tau_{p})$-$T$-tightness.

\medskip

\begin{theorem} Let $X$ be a Tychonoff space,
$\lambda\in \Psi$. Then the following are equivalent:

\begin{enumerate}

\item  $C_{\lambda}(X)$ has countable $T$-tightness;

\item  for each regular cardinal $\rho$ and each increasing
sequence $(\mathcal{U}_{\alpha}: \alpha<\rho \}$ of family of
cozero subsets of $X$ such that $\bigcup_{\alpha<\rho}
\mathcal{U}_{\alpha}$ is a $\lambda$-$f$-cover of $X$ there is a
$\beta<\rho$ so that $\mathcal{U}_{\beta}$ is a
$\lambda$-$f$-cover of $X$.

\end{enumerate}

\end{theorem}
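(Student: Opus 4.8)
The plan is to recognize this theorem as the single-topology specialization of the previous Theorem~\ref{th1}, obtained by setting $\mu=\lambda$ (so that $\mu\subseteq\lambda$ holds trivially), and to upgrade the one-directional implication proved there into a genuine equivalence. The forward direction $(1)\Rightarrow(2)$ is already contained in Theorem~\ref{th1}: countable $(\tau_\lambda,\tau_\lambda)$-$T$-tightness of $C_\lambda(X)$ is exactly countable $T$-tightness, and condition (2) of Theorem~\ref{th1} with $\mu=\lambda$ is precisely condition (2) here. So the real content is the converse $(2)\Rightarrow(1)$, which the bitopological Theorem~\ref{th1} did not supply and which Example~1 shows can genuinely fail when $\mu\neq\lambda$; the point is that it \emph{does} hold when $\mu=\lambda$.

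For $(2)\Rightarrow(1)$, I would argue by contraposition or directly. Suppose $C_\lambda(X)$ fails countable $T$-tightness. Then there is a regular uncountable cardinal $\rho$ and an increasing family $\{A_\alpha:\alpha<\rho\}$ of $\tau_\lambda$-closed subsets of $C(X)$ whose union is not $\tau_\lambda$-closed; by homogeneity I may assume the witnessing limit point is $\textbf{0}$, so $\textbf{0}\in cl_{\tau_\lambda}\bigl(\bigcup_{\alpha<\rho}A_\alpha\bigr)$ while $\textbf{0}\notin\bigcup_{\alpha<\rho}A_\alpha$, and moreover $\textbf{0}\notin cl_{\tau_\lambda}(A_\alpha)$ for every single $\alpha$ (otherwise each closed $A_\alpha$ would already contain $\textbf{0}$). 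The strategy is to convert each $A_\alpha$ into a family of cozero sets: for each $f\in A_\alpha$ and each $A\in\lambda$ on which $f$ is bounded away from $0$, the cozero set $U_{f}=\{x:|f(x)|>\varepsilon\}$-type construction (dualizing the function-to-cover passage used in Theorem~\ref{th1}) produces cozero sets, and I collect these into $\mathcal U_\alpha$. Arranging the construction so that $\{\mathcal U_\alpha:\alpha<\rho\}$ is increasing and that $\bigcup_{\alpha<\rho}\mathcal U_\alpha$ is a $\lambda$-$f$-cover (this uses that $\textbf{0}$ is in the $\tau_\lambda$-closure of the union, so every $K\in\lambda$ gets covered by some stage), I then invoke (2) to extract a single $\beta<\rho$ with $\mathcal U_\beta$ a $\lambda$-$f$-cover. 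Finally I translate this back: a $\lambda$-$f$-cover at stage $\beta$ should force $\textbf{0}\in cl_{\tau_\lambda}(A_\beta)$, contradicting the choice of the family.

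The main obstacle is the bookkeeping in the function-to-cover correspondence, specifically ensuring the cozero families $\mathcal U_\alpha$ inherit the increasing property from the $A_\alpha$ and that the final $\mu$-$f$-cover condition with $\mu=\lambda$ cleanly yields membership of $\textbf{0}$ in the $\tau_\lambda$-closure of a single $A_\beta$. This is exactly where the hypothesis $\mu=\lambda$ is indispensable: in the bitopological setting one only gets a $\mu$-$f$-cover, which controls $\tau_\mu$-closure but not $\tau_\lambda$-closure, and the gap between these is what Example~1 exploits. When $\mu=\lambda$ the two coincide and the contradiction closes. I would also handle the degenerate case $\{X\}\in\lambda$ separately, as in the proofs above, where $C_\lambda(X)$ is metrizable and hence first countable, so its tightness (and $T$-tightness) is automatically countable and (1) holds vacuously.
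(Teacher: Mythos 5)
Your overall architecture is right --- $(1)\Rightarrow(2)$ is indeed just Theorem~\ref{th1} with $\mu=\lambda$, and the substance is the converse, proved by passing from the closed sets $A_\alpha$ to families of cozero sets --- but the sketch of $(2)\Rightarrow(1)$ has two concrete problems. First, the cozero sets must be the sets where $f$ is \emph{small}, namely $f^{-1}(W_n)$ with $W_n=(-\frac{1}{n},\frac{1}{n})$, so that $F\subseteq f^{-1}(W_n)$ translates into $f\in[F,W_n]$; your $\{x:|f(x)|>\varepsilon\}$ has the inequality reversed and cannot witness that $f$ lies in a neighbourhood of $\textbf{0}$.

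Second, and more seriously, a single application of condition (2) does not suffice. A $\lambda$-$f$-cover extracted at one stage $\beta$ only guarantees, for each $F\in\lambda$, a function $f\in A_\beta$ with $|f|<\frac{1}{n}$ on $F$ for \emph{some} radius (or for one fixed radius, depending on how you assemble $\mathcal{U}_\alpha$); to conclude $\textbf{0}\in Cl_{\tau_\lambda}(A_\beta)$ you need such a function for \emph{every} basic neighbourhood $[F,W]$, i.e.\ for every $n$ simultaneously. The paper's proof handles this by applying (2) once for each $n\in\mathbb{N}$ to the increasing family $\mathcal{U}_{n,\alpha}=\{f^{-1}(W_n):f\in A_\alpha\}$, obtaining ordinals $\alpha_n<\kappa$, and then setting $\gamma=\sup_n\alpha_n$, which is still below $\kappa$ because $\kappa$ is regular and uncountable; since the $A_\alpha$ increase, $\mathcal{U}_{n,\gamma}$ is a $\lambda$-cover for all $n$ at once, whence $\textbf{0}\in\overline{A_\gamma}=A_\gamma$ and the union is closed. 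This countable-supremum step is exactly where the hypotheses on the cardinal enter, and it is missing from your argument; without it the claimed contradiction does not close. (Your case split on $\{X\}\in\lambda$ is harmless but not needed here.)
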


\begin{proof}

$(1)\Rightarrow(2)$. This follows from  Theorem \ref{th1}.

$(2)\Rightarrow(1)$. Conversely, suppose  that for each regular
cardinal $\rho$ and each increasing sequence
$(\mathcal{U}_{\alpha}: \alpha<\rho )$ of family of cozero subsets
of $X$ such that $\bigcup_{\alpha<\rho} \mathcal{U}_{\alpha}$ is a
$\lambda$-$f$-cover of $X$ there is a $\beta<\rho$ so that
$\mathcal{U}_{\beta}$ is a $\lambda$-$f$-cover of $X$.  Let
$\kappa$ be uncountable regular cardinal and $(A_{\alpha}:
\alpha<\kappa)$ be increasing sequence
 of closed subsets of $C_{\lambda}(X)$.

 Suppose $g\in \overline{\bigcup_{\alpha<\kappa} A_{\alpha}}$.
Since $C_{\lambda}(X)$ is homogeneous, we may assume $g=f_0$ where
$f_0:=\textbf{0}$. For every $n\in \mathbb{N}$ and
$\alpha<\kappa$, we set

$\mathcal{U}_{n,\alpha}=\{f^{-1}(W_{n}): f\in A_{\alpha}\}$, where
$W_{n}=(-\frac{1}{n},\frac{1}{n})$, and
$\mathcal{U}_{n}=\bigcup_{\alpha<\kappa} \mathcal{U}_{n,\alpha}$.
Every $\mathcal{U}_{n}$ is an $\lambda$-cover of $X$.

Indeed, let $F\in \lambda$ and consider the neighborhood
$[F,W_{n}]$ of $f_0$. By $f_{0}\in
\overline{\bigcup_{\alpha<\kappa} A_{\alpha}}$, there exist
$\alpha<\kappa$ and $f\in A_{\alpha}\bigcap [F, W_{n}]$. Then
$F\subset f^{-1}(W_{n})\in \mathcal{U}_{n,\alpha}$. Thus, every
$\mathcal{U}_{n}$ is an $\lambda$-cover of $X$. For every $n\in
\mathbb{N}$, we can find $\alpha_n <\kappa$ such that
$\mathcal{U}_{n,\alpha_{n}}$ is an $\lambda$-cover of $X$. Let
$\gamma$  be the supremum of $\alpha_n$'s. Then for every $n\in
\mathbb{N}$, $\mathcal{U}_{n,\gamma}$ is an $\lambda$-cover of
$X$. Now we claim $f_{0}\in A_{\gamma}$. Let $[F,W]$ be any
neighborhood of $f_{0}$ and choose $n\in \mathbb{N}$ with
$W_n\subset W$. Since $\mathcal{U}_{n,\gamma}$
 is an $\lambda$-cover of $X$, there exists an $f\in A_{\gamma}$
 such that $F\subset f^{-1}(W_{n})$. Then $f\in A_{\gamma}\bigcap [F,W]$.
Thus $f_{0}\in \overline{A_{\gamma}}=A_{\gamma}$. We conclude that
$\bigcup_{\alpha<\kappa} A_{\alpha}$ is closed in
$C_{\lambda}(X)$.

\end{proof}

\section{Bitopological R-separability and M-separability}

 $\cal R$-separability and $\cal M$-separability in bitopological
 spaces were first introduced and studied in ~{\cite{kooz}}. We have some
 analogous results on $\cal R$-separability and $\cal
 M$-separability of bitopological functional spaces endowed with
 set open topology.

\begin{definition} Let $X$ be a topological space and $\lambda$ be
a family subsets of $X$. Then the space $X$ said to be separably
$\lambda$-submetrizable if there are separable metric space $Y$,
continuous map $f$ the space $X$ onto $Y$ such that $f$ is
one-to-one on $\bigcup \lambda$.
\end{definition}

\noindent
Note that if $\bigcup \lambda=X$ then $X$ is a submetrizable
space.

 For example we consider the Mrowka-Isbell space. Let
$\mathcal{M}$ be a maximal infinite family of infinite subsets of
$\mathbb{N}$ such that the intersection of any two members of
$\mathcal{M}$ is finite, and let $\Psi= \mathbb{N}\bigcup
\mathcal{M}$, where a subset $U$ of $\Psi$ is defined to be open
provided that for any set $M\in \mathcal{M}$, if $M\in U$ then
there is a finite subset $F$ of $M$ such that $\{M\}\bigcup
M\setminus F \subset U$.

 So the Mrowka-Isbell space is separably
$\lambda$-submetrizable space (where $\lambda$ is a family finite
subsets of set $\mathbb{N}$ of isolate points of space $\Psi$),
but it is not submetrizable space.

\begin{theorem}
\label{th5.2} Let $X$ be a Tychonoff space, $\lambda\in \Psi$.
Then following conditions are equivalent:

\begin{enumerate}

\item $C_{\lambda}(X)$ is a separable space;

\item $X$ is a separably $\lambda$-submetrizable space.

\end{enumerate}

\end{theorem}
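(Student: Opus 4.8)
The plan is to treat this as the set-open analogue of the classical identity $d(C_p(X))=iw(X)$: a countable $\tau_{\lambda}$-dense family in $C_{\lambda}(X)$ should correspond to a countable family of continuous functions that separates the points of $\bigcup\lambda$, i.e. to a condensation of $X$ onto a separable metric space that is one-to-one on $\bigcup\lambda$. Both implications are produced by passing between the diagonal evaluation map $e=(f_n)_n\colon X\to\mathbb{R}^{\omega}$ attached to a countable family $\{f_n\}\subseteq C(X)$ and the dual composition map $g^{*}\colon C(Y)\to C(X)$, $g^{*}(h)=h\circ g$, attached to a map $g\colon X\to Y$. Throughout I use that, since $\lambda\in\Psi$ is hereditary with respect to $C$-compact subsets, every singleton $\{x\}$ with $x\in\bigcup\lambda$ belongs to $\lambda$.

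For $(1)\Rightarrow(2)$ I would start from a countable $\tau_{\lambda}$-dense set $\{f_n\colon n\in\mathbb{N}\}$ and set $e=(f_n)_n\colon X\to\mathbb{R}^{\omega}$, which is continuous; put $Y=e(X)$, a separable metric space, so that $e\colon X\to Y$ is a continuous surjection. It remains to check that $e$ is one-to-one on $\bigcup\lambda$. Given distinct $x,x'\in\bigcup\lambda$, both $\{x\}$ and $\{x'\}$ lie in $\lambda$; by complete regularity choose $f\in C(X)$ with $f(x)=0$, $f(x')=1$, and consider the nonempty basic $\tau_{\lambda}$-neighbourhood $[\{x\},(-1/3,1/3)]\cap[\{x'\},(2/3,4/3)]$ of $f$. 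By density some $f_n$ lies in it, whence $f_n(x)\neq f_n(x')$ and $e(x)\neq e(x')$; thus $X$ is separably $\lambda$-submetrizable.

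For $(2)\Rightarrow(1)$ fix $g\colon X\to Y$ onto a separable metric space, one-to-one on $\bigcup\lambda$. The first key step is that $g(A)$ is compact for every $A\in\lambda$: pulling continuous functions back through $g$ shows $g(A)$ is $C$-compact in $Y$, and a $C$-compact subset $S$ of a metric space is automatically compact. Indeed its closure $\overline{S}$ is closed and $C$-compact, hence (Tietze-extend from $\overline{S}$, together with metric pseudocompact $=$ compact) compact; and $S$ itself is closed, because for any $p\in\overline{S}\setminus S$ the function $h=d(\,\cdot\,,p)$ satisfies $0\in\overline{h(S)}\setminus h(S)$, so $h(S)$ would fail to be compact. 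The second, and main, step is to show $g^{*}(C(Y))$ is $\tau_{\lambda}$-dense. Given a nonempty basic neighbourhood $\bigcap_{i=1}^{m}[A_i,U_i]$ witnessed by $f_0$, set $K_i=g(A_i)$ (compact by the first step); injectivity of $g$ on $\bigcup\lambda$ forces, for any $y\in\bigcap_{i\in I}K_i$, a single preimage $a\in\bigcap_{i\in I}A_i$, so $f_0(a)\in\bigcap_{i\in I}U_i\neq\varnothing$. On the compact set $\bigcup_i K_i$ the lower and upper envelopes $\alpha(y)=\max_{\,i:\,y\in K_i}\inf U_i$ and $\beta(y)=\min_{\,i:\,y\in K_i}\sup U_i$ then satisfy $\alpha<\beta$, with $\alpha$ upper semicontinuous and $\beta$ lower semicontinuous because each $K_i$ is closed; an insertion theorem yields continuous $h_0$ with $\alpha<h_0<\beta$, i.e. $h_0(K_i)\subseteq U_i$, and Tietze extension gives $h\in C(Y)$ with $h\circ g\in\bigcap_i[A_i,U_i]$.

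Finally I would assemble separability: compactness of the $g(A)$ makes $g^{*}\colon C_k(Y)\to C_{\lambda}(X)$ continuous, since each subbasic set pulls back as $(g^{*})^{-1}[A,U]=[g(A),U]$, open in the compact-open topology. As $Y$ is separable metric it is cosmic, so $C_k(Y)$ is separable; taking a countable dense $\{h_m\}\subseteq C_k(Y)$, the countable set $\{h_m\circ g\}$ is dense in $g^{*}(C(Y))$, which is in turn $\tau_{\lambda}$-dense in $C_{\lambda}(X)$, so $C_{\lambda}(X)$ is separable. I expect the density of $g^{*}(C(Y))$ to be the main obstacle: the naive choice $h=f_0\circ(g|_A)^{-1}$ need not be continuous, since $g|_A$ is only a continuous bijection and not an embedding. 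It is precisely the compactness of $g(A)$ together with the consistency of the constraints coming from injectivity on $\bigcup\lambda$ that lets the semicontinuous-insertion argument bypass this difficulty.
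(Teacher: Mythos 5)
Your overall architecture is the same as the paper's: for $(1)\Rightarrow(2)$ you take the diagonal map $e=(f_n)_n$ of a countable dense set (your observation that singletons of $\bigcup\lambda$ lie in $\lambda$, so that density against $[\{x\},(-1/3,1/3)]\cap[\{x'\},(2/3,4/3)]$ forces injectivity of $e$ on $\bigcup\lambda$, is exactly the detail the paper omits); for $(2)\Rightarrow(1)$ you transfer separability of the compact-open topology on $C(Y)$, $Y$ separable metric, back to $C_{\lambda}(X)$ via $g^{*}$ — the paper invokes Noble's theorem where you invoke cosmicity of $Y$. Your preliminary reductions (a $C$-compact subset of a metric space is compact, hence each $g(A)$ is compact and $g^{*}\colon C_k(Y)\to C_{\lambda}(X)$ is continuous) are sound, and you correctly isolate the one step the paper dismisses with ``it follows immediately'', namely the $\tau_{\lambda}$-density of $g^{*}(C(Y))$.

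That density step, however, is where your argument has a genuine gap. In the subbasic sets $[A,U]$ of the $\lambda$-open topology, $U$ is an arbitrary open subset of $\mathbb{R}$, not an interval. From $\alpha<h_0<\beta$ with $\alpha(y)=\max_{i:\,y\in K_i}\inf U_i$ and $\beta(y)=\min_{i:\,y\in K_i}\sup U_i$ you only obtain $\inf U_i<h_0(y)<\sup U_i$ for $y\in K_i$, which does not give $h_0(K_i)\subseteq U_i$ when $U_i$ is disconnected (take $U_i=(0,1)\cup(2,3)$ and $h_0(y)=3/2$). Moreover, the reason you give for rejecting the ``naive'' choice is unfounded: set $A=\bigcup_iA_i$; then $(g,f_0)(A)$ is the graph of $f_0\circ(g|_A)^{-1}$ over $g(A)$ (by injectivity of $g$ on $\bigcup\lambda$) and is compact, since $A$ is $C$-compact, $(g,f_0)$ maps $X$ continuously into the metrizable space $Y\times\mathbb{R}$, and $C$-compact subsets of metrizable spaces are compact by your own lemma. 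A function whose graph is compact and whose domain is compact Hausdorff is continuous (the first projection of the graph is a continuous bijection of compacta, hence a homeomorphism), so $h_0=f_0\circ(g|_A)^{-1}$ is continuous on $g(A)$, satisfies $h_0(g(A_i))=f_0(A_i)\subseteq U_i$, and a Tietze extension $h\in C(Y)$ gives $h\circ g\in\bigcap_i[A_i,U_i]$. (Alternatively one can first refine each $[A_i,U_i]$ into finitely many sets $[A_{ij},I_{ij}]$ with open intervals $I_{ij}$, using that $\lambda$ is hereditary with respect to $C$-compact subsets and that the intersection of a $C$-compact set with a zero set is $C$-compact; then your envelope argument applies.) With either repair the proof is complete and substantially more detailed than the paper's.
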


\begin{proof}
$(1) \Rightarrow (2)$. Let $D$ be a countable dense subset of
$C(X)$. Let us observe that for the diagonal map
$f=\triangle_{f_{i}\in D} f_{i}$ we have $f: X \mapsto Y$ where
$Y=f(X)\subseteq \prod\limits_{i\in \mathbb N} \mathbb R_{i}$ and
 $f$ is one-to-one on $\bigcup
\lambda$.

$(2) \Rightarrow (1)$. Consider the set $C(f(X))$ with
$f(\lambda)$-open topology. Note that $f(\lambda)$ is the family
of compact subsets of $f(X)$ and it is closed under compact
subsets $f(X)$ of its elements. By Theorem (N.Noble, \cite{nb}),
the space $C_{c}(f(X))$ is separable space then
$C_{f(\lambda)}(f(X))$ is a separable space. It follows
immediately that $C_{\lambda}(X)$ is a separable space.

\end{proof}

\begin{corollary}
 Let $X$ be a Tychonoff space, $\lambda\in
\Psi$ and let $C_{\lambda}(X)$ be a separable space. Then any
element of family $\lambda$ is a metrizable compact subset of $X$.
\end{corollary}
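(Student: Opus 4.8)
The plan is to read the corollary off Theorem~\ref{th5.2} and then upgrade a statement about images to a statement about the sets themselves. Since $C_{\lambda}(X)$ is separable, Theorem~\ref{th5.2} supplies a separable metric space $Y$ and a continuous surjection $f=\triangle_{f_{i}\in D}f_{i}:X\to Y\subseteq\prod_{i\in\mathbb N}\mathbb R_{i}$, where $D$ is a countable dense subset of $C_{\lambda}(X)$ and each $f_{i}\in C(X)$, such that $f$ is one-to-one on $\bigcup\lambda$. Fix $A\in\lambda$. Because $\lambda\in\Psi$, the set $A$ is a closed $C$-compact subset of $X$, and since $A\subseteq\bigcup\lambda$ the restriction $f|_{A}:A\to f(A)$ is a continuous bijection onto $f(A)\subseteq Y$. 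Thus it suffices to prove two things: that $f(A)$ is a compact metrizable space, and that $f|_{A}$ is a homeomorphism, so that $A\cong f(A)$.

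First I would show $f(A)$ is compact and metrizable. Metrizability is automatic, as $f(A)$ is a subspace of the metric space $Y$. For compactness I would use $C$-compactness of $A$ coordinatewise: each $f_{i}\in C(X)$ gives $f_{i}(A)$ compact in $\mathbb R$, so $f(A)\subseteq\prod_{i}f_{i}(A)$, a compact metrizable cube. It then remains to see that $f(A)$ is $C$-compact in $Y$ (for $h\in C(Y)$ one has $h\circ f\in C(X)$, whence $h(f(A))=(h\circ f)(A)$ is compact) together with the elementary fact that a $C$-compact subset of a metric space is compact: the image of such a set under the distance function to any adherent point must be compact, which forces closedness, while an unbounded continuous function built on a closed discrete subsequence would rule out non-compactness. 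This gives $f(A)$ compact metric, matching the use of ``$f(\lambda)$ is a family of compact sets'' in the proof of Theorem~\ref{th5.2}.

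The main obstacle is the second step: promoting the continuous bijection $f|_{A}:A\to f(A)$ to a homeomorphism, i.e. proving $A$ itself is compact. A continuous bijection onto a compact Hausdorff space is a homeomorphism as soon as it is a closed map, so the plan is to show $f|_{A}$ carries closed subsets of $A$ to closed subsets of $f(A)$. The natural route is to prove that every set $F$ closed in $A$ is again $C$-compact in $X$: such an $F$ would then lie in $\lambda$, since $\lambda=\lambda_{m}$ is hereditary with respect to $C$-compact subsets by Theorem~\ref{th1.1}, and the first step applied to $F$ would give $f(F)$ compact, hence closed in $f(A)$. I expect this closedness to be the delicate point, since $C$-compactness of $A$ alone need not transfer to closed subsets in an arbitrary (non-normal) space; it must be extracted from the presence of the metric condensation. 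Concretely, I would try to show that $A$ is countably compact as a subspace, using that $f(A)$ is sequentially compact to locate, for any sequence $(a_{n})$ in $A$, a candidate cluster point $a_{0}\in A$ with $f(a_{n})\to f(a_{0})$, and then using $C$-compactness to certify that $a_{0}$ is a genuine cluster point in the topology of $X$; after that the clean implication ``countably compact $+$ condensation onto a metric space $\Rightarrow$ compact metrizable'' (closed sets map to closed sets via cluster points) finishes the argument and yields $A\cong f(A)$ compact metrizable. Verifying that the metric cluster point is an honest cluster point of $(a_{n})$ in $X$ is the step I expect to require the most care.
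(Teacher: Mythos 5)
The paper offers no argument for this corollary at all --- it is stated as an immediate consequence of Theorem~\ref{th5.2}, whose proof merely records that $f(\lambda)$ consists of compact subsets of $f(X)$ --- so your proposal is really supplying a proof the paper omits, and its architecture (show the image is a metrizable compactum, then transport compactness back through the condensation) is the right one. Your first step is sound as written: $f(A)$ is $C$-compact in the metric space $Y=f(X)$, and a $C$-compact subset of a metric space is closed (distance to an adherent point) and compact (Tietze on a closed discrete sequence). There is even a one-line version you may prefer: if $p\in\overline{f(A)}\setminus f(A)$, then $w(x)=\rho(f(x),p)$, with $\rho$ the standard metric on $\prod_{i}\mathbb{R}_{i}$, is a continuous function on $X$ whose image on $A$ accumulates at $0$ without containing it, contradicting $C$-compactness of $A$ directly.

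The gap sits exactly where you predicted: neither sub-route of your second step is actually carried out. The first sub-route (``every closed subset of $A$ is $C$-compact in $X$'') should not be the entry point, since $C$-compactness need not pass to closed subsets and nothing available at that stage rescues it. The second sub-route does work, and the missing certification of the cluster point goes as follows. Given a sequence $(a_{n})$ in $A$, pass to a subsequence with $f(a_{n_{k}})\to f(a_{0})$ for some $a_{0}\in A$, using that $f(A)$ is compact metric. If $a_{0}$ were not a cluster point of $(a_{n_{k}})$ in $X$, pick an open $U\ni a_{0}$ with $a_{n_{k}}\notin U$ for all large $k$, and $g\in C(X)$ with $g(a_{0})=1$ and $g(X\setminus U)=\{0\}$. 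The map $(g,f):X\to\mathbb{R}\times\prod_{i}\mathbb{R}_{i}$ sends $A$ onto a compact, hence closed, set by the same argument as in your first step; therefore the limit $(0,f(a_{0}))$ of $(g,f)(a_{n_{k}})$ equals $(g(a'),f(a'))$ for some $a'\in A$, and injectivity of $f$ on $\bigcup\lambda$ forces $a'=a_{0}$, contradicting $g(a_{0})=1$. Hence $A$ is countably compact; consequently every closed $F\subseteq A$ is countably compact, $f(F)$ is countably compact and therefore compact and closed in the metric space $f(A)$, so $f|_{A}$ is a closed continuous bijection onto a compact metrizable space, i.e.\ a homeomorphism. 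With this insertion your argument is complete; note that your first sub-route becomes true only \emph{after} countable compactness is established, as a consequence rather than a tool.
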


\begin{theorem}
\label{th2}

Let $X$ be a Tychonoff separably $\lambda$-submetrizable space,
$\lambda$, $\mu \in \Psi$ and $\mu\subseteq \lambda$. Then the
following are equivalent:

\begin{enumerate}

\item $X\in S_{1}(\Lambda(\lambda),\Lambda(\mu))$;

\item $(C(X),\tau_{\mu},\tau_{\lambda})$ is
$R_{(\tau_{\lambda},\tau_{\mu})}$ - separable.

\end{enumerate}

\end{theorem}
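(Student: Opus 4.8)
My plan is to route both implications through two results already in hand: Theorem \ref{th5.2}, by which the hypothesis that $X$ is separably $\lambda$-submetrizable says precisely that $C_\lambda(X)$ is separable, and Theorem \ref{th4.9}, by which condition (1) is equivalent to $C(X)$ satisfying the bitopological strong fan tightness $S_{1}((\Omega_{0})^{\lambda},(\Omega_{0})^{\mu})$ at $\textbf{0}$. Hence it suffices to prove that, for the separable homogeneous space $C(X)$, strong fan tightness at $\textbf{0}$ is equivalent to $R_{(\tau_{\lambda},\tau_{\mu})}$-separability. Throughout I fix a countable $\tau_{\lambda}$-dense set $Q=\{g_{m}:m\in\mathbb{N}\}$; since $\mu\subseteq\lambda$ forces $\tau_{\mu}\subseteq\tau_{\lambda}$, every $\tau_{\lambda}$-dense set is $\tau_{\mu}$-dense and in particular $Q$ is $\tau_{\mu}$-dense, while $cl_{\tau_{\mu}}$ dominates $cl_{\tau_{\lambda}}$. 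I also use that $C_{\lambda}(X)$ is a topological group, so translations are homeomorphisms for both topologies and, being a nontrivial TVS, it has no isolated points.

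For $(1)\Rightarrow(2)$ I would proceed as follows. Let $(D_{n}:n\in\mathbb{N})$ be $\tau_{\lambda}$-dense subsets of $C(X)$ and fix a partition $\mathbb{N}=\bigsqcup_{m} N_{m}$ into infinitely many infinite pieces. Because $Q$ is $\tau_{\mu}$-dense, it is enough to arrange that every $g_{m}$ lies in the $\tau_{\mu}$-closure of the selection. Fix $m$. For $n\in N_{m}$ the set $A_{n}:=D_{n}-g_{m}$ satisfies $\textbf{0}\in cl_{\tau_{\lambda}}(A_{n})$ since $g_{m}\in cl_{\tau_{\lambda}}(D_{n})$; deleting $\textbf{0}$ from those $A_{n}$ that contain it keeps $\textbf{0}$ in the $\tau_{\lambda}$-closure (no isolated points), so $(A_{n}:n\in N_{m})$ is a sequence from $(\Omega_{0})^{\lambda}$. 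Applying $S_{1}((\Omega_{0})^{\lambda},(\Omega_{0})^{\mu})$ yields $a_{n}\in A_{n}$ with $\textbf{0}\in cl_{\tau_{\mu}}(\{a_{n}:n\in N_{m}\})$, whence $d_{n}:=a_{n}+g_{m}\in D_{n}$ gives $g_{m}\in cl_{\tau_{\mu}}(\{d_{n}:n\in N_{m}\})$. Doing this for every $m$ produces $d_{n}\in D_{n}$ with $Q\subseteq cl_{\tau_{\mu}}(\{d_{n}:n\in\mathbb{N}\})$, so $\{d_{n}\}$ is $\tau_{\mu}$-dense.

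For $(2)\Rightarrow(1)$ I would, by Theorem \ref{th4.9}, deduce $S_{1}((\Omega_{0})^{\lambda},(\Omega_{0})^{\mu})$ from $R$-separability. Given $(A_{n})$ with $\textbf{0}\in cl_{\tau_{\lambda}}(A_{n})\setminus A_{n}$, the natural move is to manufacture dense sets on which $R$-separability can act: since $Q$ is dense and translations are homeomorphisms, each $D_{n}:=\bigcup_{m}(A_{n}+g_{m})$ is $\tau_{\lambda}$-dense, its closure containing all of $Q$. $R$-separability then returns $d_{n}\in D_{n}$ with $\{d_{n}\}$ $\tau_{\mu}$-dense, and writing $d_{n}=a_{n}+g_{m(n)}$ exhibits candidates $a_{n}\in A_{n}$. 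The main obstacle is exactly here: the $\tau_{\mu}$-density of the \emph{translated} selection $\{d_{n}\}$ must be converted into the localized conclusion $\textbf{0}\in cl_{\tau_{\mu}}(\{a_{n}\})$, i.e. one must recover accumulation at the single point $\textbf{0}$ after translating by the a priori uncontrolled parameters $g_{m(n)}$. I would attack this by diagonalizing over the countable set $Q$: first pre-shrink each $A_{n}$ into a neighbourhood of $\textbf{0}$ (legitimate, as $\textbf{0}\in cl_{\tau_{\lambda}}(A_{n})$), then organize the fattening so that the index $m(n)$ is readable from the location of $d_{n}$, and finally use the accumulation of $\{d_{n}\}$ at each $g_{m}$ to pull back, through the homeomorphism ``subtract $g_{m}$'', to accumulation of a subfamily of the $a_{n}$ at $\textbf{0}$. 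Equivalently, one may run the argument on covers, replacing $A_{n}$ by $B_{n}=\{f_{A,U}:A\subseteq U\in\mathcal{U}_{n}\}$ as in the proof of Theorem \ref{th4.9} and reading off, for each $F\in\mu$, a selected $U_{n}\supseteq F$ from the $\tau_{\mu}$-density at $\textbf{0}$; the delicate point remains the control of the translation parameters so that the membership $F\subseteq U_{n}$ survives. Once accumulation at $\textbf{0}$ is secured, Theorem \ref{th4.9} delivers $X\in S_{1}(\Lambda(\lambda),\Lambda(\mu))$ and the proof is complete.
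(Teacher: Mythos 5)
Your $(1)\Rightarrow(2)$ is correct: separability of $C_{\lambda}(X)$ (Theorem \ref{th5.2}) plus strong fan tightness at $\textbf{0}$ (Theorem \ref{th4.9}), combined via your partition-and-translate argument, is exactly the content of the result from \cite{kooz} (Corollary 3) that the paper's proof simply cites, so re-deriving it is harmless.

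The direction $(2)\Rightarrow(1)$, however, has a genuine gap. You propose to extract $S_{1}((\Omega_{0})^{\lambda},(\Omega_{0})^{\mu})$ abstractly from $R_{(\tau_{\lambda},\tau_{\mu})}$-separability by fattening each $A_{n}$ to the dense set $\bigcup_{m}(A_{n}+g_{m})$ and then ``diagonalizing'' to recover accumulation at $\textbf{0}$; but the step you yourself flag as delicate is precisely the one you never carry out, and it does not work as stated. $R$-separability returns a single $d_{n}=a_{n}+g_{m(n)}$ from each $D_{n}$ with $\{d_{n}:n\in\mathbb{N}\}$ $\tau_{\mu}$-dense, and that density gives accumulation at each $g_{m}$ only of the whole selection, with no control over which indices $n$ land near which $g_{m}$; hence no subfamily of the $a_{n}$ is forced to accumulate at $\textbf{0}$. (Selective separability does not in general imply the corresponding fan-tightness property, so a purely abstract reduction of this kind cannot succeed; the equivalence for function spaces is recovered only by returning to covers.) The paper sidesteps the issue entirely: given $\lambda$-covers $\mathcal{U}_{n}$, it applies $R$-separability not to the sets $B_{n}=\{f_{A,U}\}$ of Theorem \ref{th4.9} (which merely accumulate at $\textbf{0}$) but to the larger sets $A_{n}=\{f\in C(X):\ \exists\, U\in\mathcal{U}_{n},\ f(X\setminus U)=\{1\}\}$, which are genuinely $\tau_{\lambda}$-dense because on any $K=\bigcup_{i\le m}K_{i}\in\lambda$ one may prescribe $f$ arbitrarily while setting it equal to $1$ off some $U\in\mathcal{U}_{n}$ containing $K$. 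A $\tau_{\mu}$-dense selection $f_{n}\in A_{n}$ then meets $[F,(-1,1)]$ for every $F\in\mu$, which forces $F\subseteq U_{n}$ for the witnessing $U_{n}$; no translation parameters ever enter, so the ``delicate point'' dissolves. To repair your proof, replace the fattening-by-translation with this choice of dense sets.
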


\begin{proof}

$(1) \Rightarrow (2)$. By Theorem \ref{th5.2}, $C_{\lambda}(X)$ is
separable. On the other hand, by Theorem \ref{th4.9}, the bispace
$(C(X),\tau_{\lambda},\tau_{\mu})$ has countable
$(\tau_{\lambda},\tau_{\mu})$- strong fan tightness if (and only
if) $X\in S_{1}(\Lambda(\lambda),\Lambda(\mu))$. By Corollary 3 in
~{\cite{kooz}}, we obtain $(C(X),\tau_{\mu},\tau_{\lambda})$ is
$R_{(\tau_{\lambda},\tau_{\mu})}$ - separable.

$(2) \Rightarrow (1)$. Let $(\mathcal{U}_{n}: n\in \mathbb{N})$ be
a sequence of $\lambda$-covers of $X$. For every $n\in \mathbb{N}$
let  $A_{n}=\{f\in C_{\lambda}(X)$:  there is $U\in
\mathcal{U}_{n}, f(X\setminus U)=\{1\} \}$.

First, we verify that each $A_{n}$ is dense in $C_{\lambda}(X)$.
To this end let us consider  $f\in C_{\lambda}(X)$ and let
$\bigcap_{i\leq m} [K_{i},V_{i}]$ be a basic neighbourhood of $f$.
The set $K=\bigcup_{i\leq m} K_{i}$ is compact and $K\in \lambda$,
and since $\mathcal{U}_{n}$ is a $\lambda$-cover, there is $U\in
\mathcal{U}_{n}$ containing $K$. There is also $g\in
C_{\lambda}(X)$ such that $g(X\setminus U)=\{1\}$ and
$g\upharpoonright K= f\upharpoonright K$. Then $g\in
\bigcap_{i\leq m} [K_{i},V_{i}]\bigcap A_{n}$.

Since $(C(X),\tau_{\mu},\tau_{\lambda})$ is
$R_{(\tau_{\lambda},\tau_{\mu})}$ - separable there are functions
$f_{n}\in A_{n}$, $n\in \mathbb{N}$, such that the set $\{f_{n}:
n\in \mathbb{N}\}$ is dense in $C_{\mu}(X)$. Let $U_{n}\in
\mathcal{U}_{n}$ be a set for which $f_{n}(X\setminus
U_{n})=\{1\}$ holds. We claim that $\{U_{n}: n\in \mathbb{N}\}\in
\Lambda(\mu)$. Let $F\in \mu$. Suppose that  there is a point
$x_{n}\in F\setminus U_{n}$ for each $n\in \mathbb N$ which means
 $f_{n}(x_{n})=1$ and it contradicts the fact that  $\{f_{n}: n\in \mathbb{N}\}$ is dense in
$C_{\mu}(X)$.

\end{proof}

\begin{corollary}
\label{th20}

Let $X$ be a Tychonoff separably $\lambda$-submetrizable space,
$\lambda \in \Psi$. Then the following are equivalent:

\begin{enumerate}

\item $X\in S_{1}(\Lambda(\lambda),\Lambda(\lambda))$;

\item $C_{\lambda}(X)$ is $R$ - separable.

\end{enumerate}

\end{corollary}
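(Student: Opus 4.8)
The plan is to obtain this corollary as the diagonal case $\mu=\lambda$ of Theorem \ref{th2}. First I would check that the hypotheses transfer verbatim: with $\mu=\lambda$ the required containment $\mu\subseteq\lambda$ holds trivially, and $X$ is assumed to be a Tychonoff separably $\lambda$-submetrizable space, which is exactly the standing hypothesis of Theorem \ref{th2}. Hence both equivalent conditions of that theorem are at our disposal after substituting $\mu=\lambda$ throughout.

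Second, I would match the two statements of the corollary with the two statements of Theorem \ref{th2} under this substitution. Statement (1), namely $X\in S_{1}(\Lambda(\lambda),\Lambda(\lambda))$, is literally condition (1) of Theorem \ref{th2} with $\mu$ replaced by $\lambda$. For statement (2), I would invoke the convention recorded in Section 2, that when the two topologies coincide, $\tau_1=\tau_2=\tau$, the bitopological selective separability properties reduce to the corresponding monotopological ones. In particular, $R_{(\tau_{\lambda},\tau_{\lambda})}$-separability of the bispace $(C(X),\tau_{\lambda},\tau_{\lambda})$ is precisely ordinary $R$-separability of the single space $C_{\lambda}(X)$, i.e. $S_{1}(\mathcal{D},\mathcal{D})$ for the dense subsets of $C_{\lambda}(X)$. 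Thus condition (2) of Theorem \ref{th2} at $\mu=\lambda$ is exactly statement (2) of the corollary.

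The proof therefore consists of citing Theorem \ref{th2} with $\mu=\lambda$ and applying the reduction of bitopological $R$-separability to the one-topology case. I do not anticipate any genuine obstacle here, since all the substantive work resides in Theorem \ref{th2} (which in turn rests on Theorem \ref{th5.2}, Theorem \ref{th4.9}, and Corollary 3 of \cite{kooz}); the only point meriting a word of care is the identification of $R_{(\tau_{\lambda},\tau_{\lambda})}$-separability with ordinary $R$-separability, which is guaranteed by the stated convention. Consequently I would record the proof in essentially one line: the equivalence is immediate from Theorem \ref{th2} upon setting $\mu=\lambda$.
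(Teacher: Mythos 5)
Your proposal is correct and matches the paper's own (implicit) treatment: the corollary is obtained exactly by specializing Theorem \ref{th2} to $\mu=\lambda$ and invoking the stated convention that the bitopological selective separability notions reduce to the ordinary ones when the two topologies coincide. Nothing further is needed.
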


\begin{theorem}
Let $X$ be a Tychonoff separably $\lambda$-submetrizable space,
$\lambda$, $\mu \in \Psi$ and $\mu\subseteq \lambda$. Then the
following are equivalent:

\begin{enumerate}

\item $X\in S_{fin}(\Lambda(\lambda),\Lambda(\mu))$;

\item $(C(X),\tau_{\mu},\tau_{\lambda})$ is
$M_{(\tau_{\lambda},\tau_{\mu})}$ - separable.

\end{enumerate}

\end{theorem}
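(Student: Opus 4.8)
The plan is to follow the same template that worked for the $R$-separable/strong-fan-tightness case in Theorem~\ref{th2}, replacing the single-selection apparatus by the finite-selection apparatus throughout. The proof will split into the two implications $(1)\Rightarrow(2)$ and $(2)\Rightarrow(1)$.

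For $(1)\Rightarrow(2)$ I would argue exactly as in Theorem~\ref{th2}, but now invoking Theorem~\ref{thg} in place of Theorem~\ref{th4.9}. First, since $X$ is separably $\lambda$-submetrizable, Theorem~\ref{th5.2} gives that $C_{\lambda}(X)$ is separable. Next, assuming $X\in S_{fin}(\Lambda(\lambda),\Lambda(\mu))$, Theorem~\ref{thg} tells us that $(C(X),\tau_{\lambda},\tau_{\mu})$ has countable $(\tau_{\lambda},\tau_{\mu})$-fan tightness, i.e. it satisfies $S_{fin}((\Omega_{0})^{\lambda},(\Omega_{0})^{\mu})$. Combining separability with countable fan tightness then yields $M_{(\tau_{\lambda},\tau_{\mu})}$-separability. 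The clean way to package this last step is to cite the bitopological analogue of the standard ``separable $+$ countable fan tightness $\Rightarrow$ $M$-separable'' fact; the paper already uses ``Corollary~3 in~\cite{kooz}'' for the strong-fan-tightness version, so I would cite the corresponding $S_{fin}$ statement from~\cite{kooz} here.

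For $(2)\Rightarrow(1)$ I would mirror the density argument of Theorem~\ref{th2}, the only change being that $M$-separability selects \emph{finite} subsets rather than single points. Let $(\mathcal{U}_{n}: n\in\mathbb{N})$ be a sequence of $\lambda$-$f$-covers of $X$. For each $n$ define $A_{n}=\{f\in C_{\lambda}(X):\text{there is }U\in\mathcal{U}_{n}\text{ with }f(X\setminus U)=\{1\}\}$, and verify that each $A_{n}$ is dense in $C_{\lambda}(X)$ by the same interpolation argument: given a basic neighbourhood $\bigcap_{i\le m}[K_{i},V_{i}]$ of some $f$, the set $K=\bigcup_{i\le m}K_{i}$ lies in $\lambda$, so some $U\in\mathcal{U}_{n}$ contains $K$, and one can choose $g$ agreeing with $f$ on $K$ while $g(X\setminus U)=\{1\}$. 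Applying $M_{(\tau_{\lambda},\tau_{\mu})}$-separability to $(A_{n})$ yields finite sets $F_{n}\subseteq A_{n}$ with $\bigcup_{n}F_{n}$ dense in $C_{\mu}(X)$. For each $f\in F_{n}$ pick the witnessing $U\in\mathcal{U}_{n}$ and collect these into a finite $\mathcal{V}_{n}\subseteq\mathcal{U}_{n}$. The claim is that $\bigcup_{n}\mathcal{V}_{n}\in\Lambda(\mu)$: given $F\in\mu$, if no member of any $\mathcal{V}_{n}$ contained $F$ then the $\tau_{\mu}$-neighbourhood $[F,(-1,1)]$ of $\textbf{0}$ would miss every function in $\bigcup_{n}F_{n}$, since each such $f$ takes value $1$ at some point of $F$, contradicting density in $C_{\mu}(X)$.

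The main obstacle, and the place where the $S_{fin}$ case genuinely differs from the $S_{1}$ case, is bookkeeping the finiteness at the level of the covers. In Theorem~\ref{th2} each $A_{n}$ contributes a single function and hence a single set $U_{n}$, so the selected cover is automatically $\{U_{n}:n\in\mathbb{N}\}$; here each finite $F_{n}$ contributes a finite family $\mathcal{V}_{n}$, so I must confirm that the resulting $\bigcup_{n}\mathcal{V}_{n}$ is still a countable union of finite subfamilies and that the final contradiction argument correctly uses \emph{some} function of $\bigcup_{n}F_{n}$ rather than a designated one. This is exactly the finite-to-single passage that distinguishes Menger-type from Rothberger-type selections, and once it is handled the rest is a routine transcription of the previous proof.
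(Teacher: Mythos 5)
Your proposal is correct and follows essentially the same route as the paper's own proof: separability of $C_{\lambda}(X)$ from Theorem \ref{th5.2}, countable $(\tau_{\lambda},\tau_{\mu})$-fan tightness from the $S_{fin}$ characterization, and the cited corollary of \cite{kooz} for the forward direction; the same dense sets $A_n$, finite selections $F_n$, and witnessing finite subfamilies $\mathcal{V}_n\subseteq\mathcal{U}_n$ for the converse. (You even correctly invoke Theorem \ref{thg} where the paper's text misprints the reference as Theorem \ref{th1}.)
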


\begin{proof}

$(1) \Rightarrow (2)$. The space $C_{\lambda}(X)$ is separable
since $X$ is separably $\lambda$-submetrizable space. On the other
hand, by Theorem \ref{th1}, $(\tau_{\lambda},\tau_{\mu})$-fan
tightness of $(C(X),\tau_{\mu},\tau_{\lambda})$ is countably if
and only if $X$ has selection property
$S_{fin}(\Lambda(\lambda),\Lambda(\mu))$ and now apply  Corollary
6 in ~{\cite{kooz}}.

$(2) \Rightarrow (1)$. Assume that  $(\mathcal{U}_{n}:n\in \mathbb{N})$ be
a sequence of $\lambda$-covers. For every $n\in \mathbb{N}$ we set

$V_{n}=\{f\in C_{\lambda}(X)$: there is $U\in \mathcal{U},
f(X\setminus U)=\{1\}\}$.

We follow the  proof of Theorem \ref{th2} to show that each $V_{n}$
is dense in $C_{\lambda}(X)$.

By the hypothesis   $(C(X),\tau_{\mu},\tau_{\lambda})$ is
$M_{(\tau_{\lambda},\tau_{\mu})}$ - separable there are finite
sets $W_{n}=\{f_{n,1},..., f_{n,m_{n}}\}\subset V_{n}$, $n\in
\mathbb{N}$, such that the set $\bigcup_{n\in \mathbb{N}} W_{n}$
is dense in $C_{\mu}(X)$.

Now consider the set
$\mathbb{D}_{n}=\{U_{n,1},..., U_{n,m_{n}}: f_{n,i}(X\setminus
U_{n,i})=\{1\}, i\leq m_{n}\}$ which is a finite subset  of $\mathcal{U}_{n}$. It remains to show that $\bigcup_{n\in
\mathbb{N}} \mathbb{D}_{n}\in \Lambda(\mu)$. Let $F\in \mu$. For some $j\in  \mathbb{N}$ we have
$[F,(-1,1)] \cap W_{j}$ , i.e. there is a function $f_{j,m_{j}}\in W_{j}$ such
that $f_{j,m_{j}}(x)\in (-1,1)$ for each $x\in F$. This means
$F\subset U_{j,m_{j}}$ as required.

\end{proof}

\begin{corollary}\label{cor1}
Let $X$ be a Tychonoff separably $\lambda$-submetrizable space,
$\lambda \in \Psi$. Then the following are equivalent:

\begin{enumerate}

\item $X\in S_{fin}(\Lambda(\lambda),\Lambda(\lambda))$;

\item $C_{\lambda}(X)$ is $M$ - separable.

\end{enumerate}

\end{corollary}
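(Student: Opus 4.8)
The plan is to prove Corollary~\ref{cor1} as a direct specialization of the theorem immediately preceding it. That theorem establishes the equivalence of $X\in S_{fin}(\Lambda(\lambda),\Lambda(\mu))$ with the $M_{(\tau_{\lambda},\tau_{\mu})}$-separability of $(C(X),\tau_{\mu},\tau_{\lambda})$ for $\mu\subseteq\lambda$. Since the corollary concerns ordinary $M$-separability of the single space $C_{\lambda}(X)$, my first step is to observe that the ordinary (monotopological) notions are recovered from the bitopological ones by taking $\mu=\lambda$.

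The argument proceeds as follows. First I would set $\mu=\lambda$ in the preceding theorem. The hypothesis $\mu\subseteq\lambda$ then holds trivially, and $X$ remains a Tychonoff separably $\lambda$-submetrizable space, so all standing assumptions of the theorem are met. Next I would invoke the remark made early in the paper---namely, that ``in case $\tau_1=\tau_2=\tau$, these definitions coincide with definitions of corresponding topological selective versions of separability.'' Applied here with $\tau_{\mu}=\tau_{\lambda}$, this identifies $M_{(\tau_{\lambda},\tau_{\lambda})}$-separability of the bispace $(C(X),\tau_{\lambda},\tau_{\lambda})$ with ordinary $M$-separability of $C_{\lambda}(X)$, since $S_{fin}(\mathcal{D}_{\lambda},\mathcal{D}_{\lambda})$ is exactly the definition of $M$-separability for the topology $\tau_{\lambda}$. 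Finally, the selection property $S_{fin}(\Lambda(\lambda),\Lambda(\mu))$ becomes $S_{fin}(\Lambda(\lambda),\Lambda(\lambda))$, which is precisely statement~(1) of the corollary.

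Stringing these identifications together, the equivalence (1)$\Leftrightarrow$(2) of the preceding theorem becomes exactly the equivalence (1)$\Leftrightarrow$(2) of the corollary, so nothing further is needed. I do not anticipate a genuine obstacle here, since the corollary is purely a notational reduction. The only point requiring a word of care is the verification that setting $\mu=\lambda$ legitimately collapses the bitopological $M_{(\tau_{\lambda},\tau_{\mu})}$-separability to the ordinary $M$-separability of $C_{\lambda}(X)$; this is justified precisely by the cited remark on the coincidence of definitions when the two topologies agree, and by the observation that $\lambda=\mu$ forces $\tau_{\lambda}=\tau_{\mu}$. Hence the proof is a one-line appeal: the result follows from the theorem by taking $\mu=\lambda$.
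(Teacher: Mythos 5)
Your proposal is correct and matches the paper's intent exactly: the corollary is stated without proof precisely because it is the specialization $\mu=\lambda$ of the preceding theorem, with the bitopological notions collapsing to the ordinary ones when the two topologies coincide. Nothing further is needed.
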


Recall that a bispace $(X,\tau_1,\tau_2)$ is
$(\tau_i,\tau_j)$-Pytkeev $(i\neq j; i,j=1,2)$ ~{\cite{ko1}} (see
also ~{\cite{panpa}}) if for each $A\subset X$ and each $x\in
Cl_i(A)\setminus A$ there are infinite sets $B_n\subset A$, $n\in
\mathbb{N}$, such that each $\tau_j$-neighbourhood of $x$ contains
some $B_n$.

By Theorem 9 in ~{\cite{kooz}}, we have following result.

\begin{theorem}
Let $X$ be a Tychonoff separably $\lambda$-submetrizable space,
$\lambda$, $\mu \in \Psi$ and $\mu\subseteq \lambda$. If
$(C(X),\tau_{\mu},\tau_{\lambda})$ is
$M_{(\tau_{\mu},\tau_{\lambda})}$ - separable and
$(\tau_{\mu},\tau_{\lambda})$-Pytkeev bispace, then it is
$R_{(\tau_{\mu},\tau_{\lambda})}$ - separable.

\end{theorem}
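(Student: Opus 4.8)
The plan is to deduce this from Theorem~9 in~\cite{kooz}, whose hypothesis is exactly that a bispace which is $M$-separable and Pytkeev (in the appropriate pair of topologies) is $R$-separable. Since the excerpt tells us to assume results stated earlier, the substantive task is to verify that the general bispace statement of~\cite{kooz} applies to the concrete bispace $(C(X),\tau_\mu,\tau_\lambda)$ under the standing hypotheses, namely that $X$ is separably $\lambda$-submetrizable and $\mu\subseteq\lambda$ with $\lambda,\mu\in\Psi$. First I would record that both $M_{(\tau_\mu,\tau_\lambda)}$-separability and the $(\tau_\mu,\tau_\lambda)$-Pytkeev property are precisely the bitopological notions $S_{fin}(\mathcal D_1,\mathcal D_2)$ and the Pytkeev condition that Theorem~9 is phrased in terms of, once we set $\tau_1=\tau_\mu$ and $\tau_2=\tau_\lambda$.

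Next I would check the compatibility needed for Theorem~9 to fire: the collections $\mathcal D_i$ of dense subsets in the two topologies must interact correctly, and in particular $C_\mu(X)$ and $C_\lambda(X)$ must be \emph{homogeneous} topological groups (guaranteed here by the standing assumption $\lambda=\lambda_m\in\Psi$, $\mu=\mu_m\in\Psi$ together with Theorem~\ref{th1.1}), and they must be \emph{separable}, so that the selection process starts from genuinely dense sequences. Separability of $C_\lambda(X)$ is exactly Theorem~\ref{th5.2}, using that $X$ is separably $\lambda$-submetrizable; and since $\mu\subseteq\lambda$, the coarser topology $\tau_\mu$ inherits separability from $\tau_\lambda$ (a dense set in the finer topology is dense in the coarser one). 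With homogeneity and separability in hand, the abstract implication ``$M$-separable $+$ Pytkeev $\Rightarrow$ $R$-separable'' of~\cite{kooz} transfers verbatim.

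I would then simply invoke Theorem~9 in~\cite{kooz} to conclude that $(C(X),\tau_\mu,\tau_\lambda)$ is $R_{(\tau_\mu,\tau_\lambda)}$-separable, which is the desired statement. The argument is thus a matter of matching definitions and citing the two earlier results (Theorem~\ref{th5.2} for separability and Theorem~9 of~\cite{kooz} for the core implication), rather than constructing a selection by hand.

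The main obstacle I anticipate is not any deep combinatorics but the bookkeeping of \emph{which} pair of topologies plays the role of $\tau_i$ and $\tau_j$ in~\cite{kooz}: the Pytkeev property, $M$-separability, and the conclusion of $R$-separability must all be read off in the same orientation $(\tau_\mu,\tau_\lambda)$, and one must confirm that Theorem~9 of~\cite{kooz} is stated for an arbitrary homogeneous separable bispace rather than only for $(C_p(X),C_k(X))$. If~\cite{kooz} proves the implication only under additional structural assumptions on the bispace, I would need to check that those assumptions hold for set-open function bispaces; but given the way the present paper has systematically lifted the $C_p$/$C_k$ results of~\cite{kooz} to the $\tau_\lambda$/$\tau_\mu$ setting, I expect the transfer to be immediate.
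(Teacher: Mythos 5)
Your proposal takes essentially the same route as the paper, which offers no argument beyond the sentence ``By Theorem 9 in \cite{kooz}, we have the following result'' --- i.e., the entire proof is the reduction to the general bispace implication that you describe. Your additional bookkeeping (separability of $C_{\lambda}(X)$ via Theorem~\ref{th5.2}, homogeneity from Theorem~\ref{th1.1}, and keeping the orientation $(\tau_{\mu},\tau_{\lambda})$ consistent across the $M$-separability, Pytkeev, and $R$-separability hypotheses) is in fact more careful than what the paper records.
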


\begin{corollary}
Let $X$ be a Tychonoff separably $\lambda$-submetrizable space,
$\lambda \in \Psi$. If $C_{\lambda}(X)$ is $M$ - separable and
Pytkeev space, then it is $R$ - separable.

\end{corollary}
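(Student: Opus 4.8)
The plan is to obtain this statement as the special case $\mu=\lambda$ of the immediately preceding theorem, so that essentially no new argument is required. First I would verify that the hypotheses of that theorem are met in this specialization: taking $\mu=\lambda$ we trivially have $\lambda\subseteq\lambda$, and $\lambda\in\Psi$ is assumed, while the base space $X$ is Tychonoff and separably $\lambda$-submetrizable in both statements. Hence the theorem applies with $\mu$ replaced by $\lambda$.

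The key observation is that when $\mu=\lambda$ the two topologies $\tau_\mu$ and $\tau_\lambda$ coincide, so the bispace $(C(X),\tau_\mu,\tau_\lambda)$ reduces to the single topological space $C_\lambda(X)$ carrying the same topology in both coordinates. As already remarked in Section~2 (the case $\tau_1=\tau_2=\tau$), in this situation the bitopological selective notions collapse to their monotopological counterparts: $M_{(\tau_\mu,\tau_\lambda)}$-separability becomes ordinary $M$-separability and $R_{(\tau_\mu,\tau_\lambda)}$-separability becomes ordinary $R$-separability. Similarly, the $(\tau_\mu,\tau_\lambda)$-Pytkeev condition, read with both topologies equal to $\tau_\lambda$, becomes exactly: for each $A\subseteq C(X)$ and each $f\in Cl_{\tau_\lambda}(A)\setminus A$ there are infinite sets $B_n\subseteq A$ such that every $\tau_\lambda$-neighbourhood of $f$ contains some $B_n$ --- i.e. the usual Pytkeev property of $C_\lambda(X)$.

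Putting these identifications together, the hypothesis of the theorem ($M_{(\tau_\mu,\tau_\lambda)}$-separable and $(\tau_\mu,\tau_\lambda)$-Pytkeev) becomes precisely ``$C_\lambda(X)$ is $M$-separable and Pytkeev,'' and its conclusion ($R_{(\tau_\mu,\tau_\lambda)}$-separable) becomes ``$C_\lambda(X)$ is $R$-separable.'' Thus the corollary follows at once. I do not anticipate any substantial obstacle: the only point requiring minor care is confirming that the bitopological Pytkeev definition, which was stated for indices $i\neq j$, genuinely degenerates to the standard Pytkeev property once the two topologies are identified, so that the theorem's hypotheses and conclusion translate verbatim into the desired monotopological assertions about $C_\lambda(X)$.
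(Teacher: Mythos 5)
Your proposal is correct and matches the paper's (implicit) argument exactly: the corollary is simply the case $\mu=\lambda$ of the preceding theorem, with the bitopological notions collapsing to their usual monotopological counterparts as already noted in Section~2 for the case $\tau_1=\tau_2$. No further argument is needed.
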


\section{Bitopological $H$-separability and $GN$-separability}

In this section we will be interested in some results on
bitopological $H$-separability and $GN$-separability.

We begin by recalling the notion of weak selectively Reznichenko property
for the bitopological spaces.
A bispace $(X,\tau_1, \tau_2)$ has the weak selectively
$(\tau_{i},\tau_{j})$-Reznichenko property ($i\neq j; i,j=1,2$),
if for each sequence $(A_{n}:n\in \mathbb{N})$ of subsets of $X$
and each point $x\in \bigcap_{n\in \mathbb{N}} Cl_{i}(A_n)$ there
are finite sets $B_n\subset A_n$, $n\in \mathbb{N}$, such that
each $\tau_{j}$-neighbourhood of $x$ intersects $B_{n}$ for all
but finitely many $n$.

The definition of the selective bitopological version of the
Reznichenko property was  given in  {\cite{panpa}}. It has been
characterized by considering the compact-open and the topology of
pointwise convergence on the set of all continuous real-valued
functions in ~{\cite{pav}}.

The notion of weak selectively Reznichenko property
was introduced in ~{\cite{kooz}}. The following results may be proved in much the same way as
Theorem 10 and Theorem 11 in~{\cite{kooz}}.

\begin{theorem}\label{th5}
Let $X$ be a Tychonoff separably $\lambda$-submetrizable space,
$\lambda$, $\mu \in \Psi$ and $\mu\subseteq \lambda$. Then the
following are equivalent:

\begin{enumerate}

\item $(C(X),\tau_{\mu},\tau_{\lambda})$ is
$H_{(\tau_{\lambda},\tau_{\mu})}$ - separable;

\item For each sequence $(\mathcal{U}_{n}:n\in \mathbb{N})$ of
$\lambda$-covers there is a sequence $(\mathcal{V}_{n}:n\in
\mathbb{N})$ of finite sets such that for each $n$,
$\mathcal{V}_{n}\subset \mathcal{U}_{n}$ and each $F\in \mu$ is
contained in an element of $\mathcal{V}_{n}$ for all but finitely
many $n\in \mathbb{N}$.

\end{enumerate}

\end{theorem}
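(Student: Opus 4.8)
The plan is to establish the equivalence between bitopological $H$-separability of $(C(X),\tau_{\mu},\tau_{\lambda})$ and the combinatorial covering property $(2)$ by adapting the ``diagonal'' function-to-cover correspondence already exploited in Theorems \ref{th4.9}, \ref{thg} and \ref{th2}. Since $X$ is separably $\lambda$-submetrizable, Theorem \ref{th5.2} guarantees that $C_{\lambda}(X)$ is separable, so the dense sets $A_n$ I construct below will genuinely witness a selective-separability selection. The heart of the matter is to translate the ``for all but finitely many $n$'' clause in $H$-separability into the matching ``all but finitely many $n$'' clause about $\mu$-$f$-covers in $(2)$, so the groupable/Hurewicz flavour is preserved under the correspondence.

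First I would prove $(2)\Rightarrow(1)$. Let $(D_n : n\in\mathbb{N})$ be a sequence of dense subsets of $C_{\lambda}(X)$; homogeneity lets me test density at $\textbf{0}$. For each $n$ and each $K\in\lambda$ the basic neighbourhood $[K,(-1/n,1/n)]$ meets $D_n$, giving $f_{K,n}\in D_n$ with $|f_{K,n}|<1/n$ on $K$; continuity yields a cozero set $U_{K,n}$ with $f_{K,n}(U_{K,n})\subseteq(-1/n,1/n)$, and since $K\neq X$ one arranges $U_{K,n}\neq X$. Thus $\mathcal{U}_n=\{U_{K,n}: K\in\lambda\}$ is a $\lambda$-$f$-cover. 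Applying $(2)$ to this sequence produces finite $\mathcal{V}_n\subseteq\mathcal{U}_n$ such that every $F\in\mu$ lies in some member of $\mathcal{V}_n$ for all but finitely many $n$. Setting $W_n=\{f_{K,n}: U_{K,n}\in\mathcal{V}_n\}$ gives finite subsets of $D_n$; to check that every $\tau_{\mu}$-neighbourhood $[F,(-1,1)]$ of $\textbf{0}$ meets $W_n$ for cofinitely many $n$, observe that for such $n$ there is $U_{K,n}\in\mathcal{V}_n$ with $F\subseteq U_{K,n}$, whence $f_{K,n}(F)\subseteq(-1/n,1/n)\subseteq(-1,1)$. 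This is exactly the $H_{(\tau_{\lambda},\tau_{\mu})}$-separability condition.

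For $(1)\Rightarrow(2)$, given a sequence of $\lambda$-covers $(\mathcal{U}_n)$, I form for each pair $A\subseteq U\in\mathcal{U}_n$ (with $A\in\lambda$) a continuous $f_{A,U}:X\to[0,1]$ with $f_{A,U}(A)=\{0\}$ and $f_{A,U}(X\setminus U)=\{1\}$, and set $D_n=\{f_{A,U}: A\in\lambda,\ A\subseteq U\in\mathcal{U}_n\}$. As in the proof of Theorem \ref{th4.9}, each $D_n$ is dense in $C_{\lambda}(X)$: any basic neighbourhood $[A,V]$ of $\textbf{0}$ contains $f_{A,U}$ for a suitable $U\supseteq A$ in $\mathcal{U}_n$. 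Invoking $(1)$ yields finite $W_n\subseteq D_n$ whose union is $H$-dense in $C_{\mu}(X)$; letting $\mathcal{V}_n$ be the (finite) collection of sets $U$ that appear as the second coordinate of some $f_{A,U}\in W_n$ gives $\mathcal{V}_n\subseteq\mathcal{U}_n$. For $F\in\mu$, the neighbourhood $[F,(-1,1)]$ meets $W_n$ for all but finitely many $n$, and a witnessing $f_{A,U}\in W_n\cap[F,(-1,1)]$ forces $F\subseteq U$ (else $f_{A,U}(x)=1$ for some $x\in F\setminus U$), so $F$ is contained in a member of $\mathcal{V}_n$ for cofinitely many $n$, which is precisely $(2)$.

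The step I expect to be the main obstacle is the careful bookkeeping of the ``all but finitely many'' quantifier across the two directions, since one must ensure the finiteness of each $\mathcal{V}_n$ (resp. $W_n$) is preserved and that the cofinite-intersection property transfers cleanly rather than degrading to a mere $M$-separability (finite-union) conclusion. In particular, in $(1)\Rightarrow(2)$ one must verify that distinct functions in the finite set $W_n$ do not collapse the count of distinct cozero sets in $\mathcal{V}_n$ in a way that breaks finiteness, and in $(2)\Rightarrow(1)$ one must confirm that the choice of $U_{K,n}$ is made uniformly enough that $\mathcal{V}_n$ pulls back to a genuinely finite $W_n$. Both are routine once the correspondence $f_{A,U}\leftrightarrow U$ is fixed, exactly as in Theorem 10 of \cite{kooz}, which is why the statement asserts the proof runs along the same lines.
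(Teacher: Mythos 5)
Your $f_{A,U}\leftrightarrow U$ dictionary is the right engine, and your $(1)\Rightarrow(2)$ extraction of $\mathcal{V}_n$ from the finite selections is essentially the paper's. But both directions share one genuine gap: you treat density and $H$-separability as if they were local properties at $\mathbf{0}$ that homogeneity disposes of, and they are not. In $(1)\Rightarrow(2)$, your sets $D_n=\{f_{A,U}:A\in\lambda,\ A\subseteq U\in\mathcal{U}_n\}$ are \emph{not} dense in $C_{\lambda}(X)$: every $f_{A,U}$ takes values in $[0,1]$, so a basic neighbourhood such as $[A,(5,7)]$ misses $D_n$ entirely. What you verify --- that every basic neighbourhood of $\mathbf{0}$ meets $D_n$ --- only gives $\mathbf{0}\in Cl_{\tau_{\lambda}}(D_n)$, which is what Theorem \ref{th4.9} needs but not something to which hypothesis $(1)$ applies. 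The paper instead uses the larger sets $D_n=\{f\in C(X):\ \exists\,U\in\mathcal{U}_n,\ f(X\setminus U)=\{1\}\}$ and checks genuine density (given any $f$ and $\bigcap_{i\le m}[K_i,V_i]$, find $g$ with $g\upharpoonright K=f\upharpoonright K$ for $K=\bigcup_{i\le m}K_i$ and $g\equiv 1$ off a suitable $U$); with that change your argument goes through.

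In $(2)\Rightarrow(1)$ the gap is more serious. Your finite sets $W_n\subseteq D_n$ are only shown to meet every $\tau_{\mu}$-neighbourhood \emph{of $\mathbf{0}$} for cofinitely many $n$, whereas $H$-separability requires a single selection meeting \emph{every} nonempty $\tau_{\mu}$-open set cofinitely; homogeneity does not bridge this, since translating the selection moves it off the given dense sets. What you have actually proved is (in substance) the weak selectively $(\tau_{\lambda},\tau_{\mu})$-Reznichenko property at $\mathbf{0}$ --- precisely Claim 1 of the paper's proof. The missing step is the paper's Claim 2: fix a countable $\tau_{\lambda}$-dense (hence $\tau_{\mu}$-dense) set $\{d_m:m\in\mathbb{N}\}$, which exists by separable $\lambda$-submetrizability and Theorem \ref{th5.2}; apply the Reznichenko-type selection at each $d_m$ to obtain finite $R_{n,m}\subseteq E_n$ whose $\tau_{\mu}$-neighbourhoods of $d_m$ meet all but finitely many $R_{n,m}$; and put $R_n=\bigcup\{R_{n,m}:m\le n\}$, so that any nonempty $\tau_{\mu}$-open set, containing some $d_m$, meets all but finitely many $R_n$. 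This is the only place the separability hypothesis is actually used --- you cite it in your preamble but never use the countable dense set. Two smaller points: the case in which the cozero sets $U_{K,n}$ cannot all be kept different from $X$ must be treated separately (the paper's Case 1, where one instead extracts a $\tau_{\mu}$-convergent sequence), and the ``bookkeeping of finiteness'' you single out as the main obstacle is not where the difficulty lies; the local-to-global passage is.
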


\begin{proof} $(1) \Rightarrow (2)$. Let  $(\mathcal{U}_{n}:n\in \mathbb{N})$ be
a sequence of $\lambda$-covers. For every $n\in \mathbb{N}$ let

$D_{n}=\{f\in C_{\lambda}(X)$: there is $U\in \mathcal{U}_n,
f(X\setminus U)=\{1\}\}$.

\noindent
Easily one can prove that each  $D_{n}$ is dense in
$(C(X),\tau_{\lambda})$. Since $(C(X),\tau_{\mu},\tau_{\lambda})$
is $H_{(\tau_{\lambda},\tau_{\mu})}$ - separable there are finite
sets $F_{n}\subset D_{n}$, $n\in \mathbb{N}$, such that each
$\tau_{\mu}$-open set intersects $F_n$ for all but finitely many
$n$. Let $\mathcal{V}_{n}$, $n\in \mathbb{N}$, be the family of
sets $U_f \in \mathcal{U}_n$, $f\in F_{n}$, such that
$f(X\setminus U_f)=\{1\}$. We need to verify that  the sequence
$(\mathcal{V}_{n}: n\in \mathbb{N})$ witnesses that $X$ satisfies
(2).

Let $K\in \mu$. The open neighborhood  $H=[K,(-1,1)]$ intersects
$F_m$ for each $m$ bigger than  $m_0$; now pick $f_m\in H\bigcap F_m$,
$m>m_0$. Then for each $m>m_0$, $K\subset U_{f_m}\in
\mathcal{V}_{m}$, as required in (2).

\medskip
$(2) \Rightarrow (1)$. The proof consists of two parts.

\smallskip
$\bf Claim$ 1. $C(X)$ has the weak selectively
$(\tau_{\lambda},\tau_{\mu})$-Reznichenko property.

We take a sequence  $(D_n: n\in \mathbb{N})$  of subsets of $C(X)$
with the $\tau_{\lambda}$-closures of which contain $\bf{0}$. For
every $T\in \lambda$ and every $m\in \mathbb{N}$ the
$\tau_{\lambda}$-neighborhood $[T,\frac{1}{m}]$ of $\bf{0}$
intersects each $D_{n}$. So for each $n\in \mathbb{N}$ there
exists a function $f_{T,n,m}\in D_{n}$ satisfying
$|f_{T,n,m}(x)|<\frac{1}{m}$ for each $x\in T$. For each $n$ set

$\mathcal{U}_{n,m}=\{f^{-1}(-\frac{1}{m}, \frac{1}{m}): m\in
\mathbb{N}, f\in D_{n}\}$.

(For a bijection $\varphi : \mathbb{N}^{2}\mapsto \mathbb{N}$ we
put   $\mathcal{U}_{n,m}:= \mathcal U_{\varphi(m,n)}$). We claim
that for each $n,m\in \mathbb{N}$, each $C\in \lambda$ is
contained in an element of $\mathcal{U}_{n,m}$. Indeed, if $C\in
\lambda$, then there is $f_{C,n,m}\in [C,\frac{1}{m}]\bigcap
D_{n}$. Hence $|f_{C,n,m}(x)|< \frac{1}{m}$ for each $x\in C$.
This shows that $C\subset
f^{-1}_{C,n,m}(-\frac{1}{m},\frac{1}{m})\in \mathcal{U}_{n,m}$.

Put $S:=\{m\in \mathbb{N}: X\in \mathcal{U}_{n,m}$ for some $n\in
\mathbb{N}\}$. There are two cases to consider.

$\bf Case$ 1. $S$ is infinite.

There are $m_1<m_2<...$ in $M$ and (the corresponding) $n_1, n_2,
...$ in $\mathbb{N}$ such that
$f^{-1}_{T_i,n_i,m_i}(-\frac{1}{m_i}, \frac{1}{m_i})=X$ for all
$i\in \mathbb{N}$ and some $T_i\in \lambda$. Let $[R,\epsilon]$ be
a $\tau_{\mu}$- neighborhood of $\bf{0}$. Pick $m_k$ such that
$\frac{1}{m_k}<\epsilon$. For every $m_i>m_k$ we have
$f_{T_i,n_i,m_i}(x)\in (-\frac{1}{m_i}, \frac{1}{m_i})$ for each
$x\in X$ and so $f_{T_i,n_i,m_i}\in [R, \frac{1}{m_i}]\subset
[R,\epsilon]$. This means that the sequence $(f_{T_i,n_i,m_i}:
i\in \mathbb{N})$ $\tau_{\mu}$-converges to $\bf{0}$, hence $C(X)$
has the weak selectively $(\tau_{\lambda},
\tau_{\mu})$-Reznichenko property at $\bf{0}$.

$\bf Case$ 2. Consider the case $S$ is finite.

There is $m_0\in \mathbb{N}$ such that for each $m\geq m_0$ and
each $n\in \mathbb{N}$, the set $\mathcal{U}_{n,m}$ is a
$\lambda$-cover of $X$. We may suppose $m_0=1$. Further, we can
consider only $\lambda$-covers $\mathcal{U}_{n,n}$, $n\in
\mathbb{N}$. We can apply the condition (2) of this theorem to the
sequence $\mathcal{U}_{n,m}$ to get a sequence $\mathcal{V}_{n,m}$
where for each $n\in \mathbb{V}$ $\mathcal{V}_{n,m}$ is a finite
subset of $\mathcal{U}_{n,m}$ so that each $R\in \mu$ belongs to
some $V\in \mathcal{V}_{n,n}$ for all but finitely many $n$.
Choose the corresponding functions
$f_{T_V,\frac{1}{n},\frac{1}{n}}$, $V\in \mathcal{V}_{n,n}$, and
put $F_{n}=\{ f_{T_V,\frac{1}{n},\frac{1}{n}}: V\in
\mathcal{V}_{n,n}\}$. Then each $F_n$ is a finite subset of $D_n$.
Let $[R,\frac{1}{i}]$ be a neighborhood of $\bf{0}$. Let $n_0$ be
such that $\frac{1}{n}<\frac{1}{i}$ and for each $n>n_0$ there is
$V_{n}\in \mathcal{V}_{n,n}$ containing $R$. Choose a
corresponding $f_n\in F_n$. Since this can be done for all
$n>n_0$, we conclude that for all $n>n_0$ we have $f_n\in
[R,\frac{1}{i}]$, i.e., $F_n\bigcap [R,\frac{1}{i}]\neq \emptyset$
for all $n>n_0$.

\smallskip
We now get back to proving the theorem.

 $\bf Claim$ 2. $C(X)$ is
$H_({\tau_{\lambda},\tau_{\mu}})$-separable.

Since  $X$ is a Tychonoff separably $\lambda$-submetrizable space,
and $\tau_{\mu}\leq \tau_{\lambda}$, there is a countable dense
subset $D=\{d_n: n\in \mathbb{N}\}$ in $(C(X), \tau_{\lambda})$ so
also in $(C(X), \tau_{\mu})$. Let $(E_{n}: n\in \mathbb{N})$ be a
sequence of dense subsets of $(C(X),\tau_{\lambda})$. Fix $m\in
\mathbb{N}$. Since $d_{m}\in Cl_{\tau_{\lambda}}(E_n)$ for each
$n\in \mathbb{N}$, and $C(X)$ has the weak selectively
$(\tau_{\lambda}, \tau_{\mu})$-Reznichenko property, there are
finite sets $R_{n,m}$, such that for each $n$, $R_{n,m}\subset
E_n$ and each $\tau_{\mu}$-neighborhood of $d_m$ intersects all
but finitely many $R_{n,m}$. For each $n$ put
$R_{n}=\bigcup\{R_{n,m}: m\leq n\}.$ The sequence $(R_n: n\in
\mathbb{N})$ witnesses for $(E_{n}: n\in \mathbb{N})$ that $C(X)$
is $\cal H_{(\tau_{\lambda},\tau_{\mu})}$-separable. Indeed, let
$G$ be an open set in $(C(X),\tau_{\mu})$. Then there is $d_m\in
G$, hence $G$ meets all but finitely many $R_n$.

\end{proof}

\begin{corollary}\label{th50}
Let $X$ be a Tychonoff separably $\lambda$-submetrizable space,
$\lambda \in \Psi$. Then the following are equivalent:

\begin{enumerate}

\item $C_{\lambda}(X)$ is $H$ - separable;

\item For each sequence $(\mathcal{U}_{n}:n\in \mathbb{N})$ of
$\lambda$-covers there is a sequence $(\mathcal{V}_{n}:n\in
\mathbb{N})$ of finite sets such that for each $n$,
$\mathcal{V}_{n}\subset \mathcal{U}_{n}$ and each $F\in \lambda$
is contained in an element of $\mathcal{V}_{n}$ for all but
finitely many $n\in \mathbb{N}$.

\end{enumerate}

\end{corollary}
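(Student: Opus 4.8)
The plan is to obtain Corollary \ref{th50} directly from Theorem \ref{th5} by specializing the second topology to coincide with the first, that is, by setting $\mu=\lambda$. Since $\lambda\in\Psi$ is given and the inclusion $\mu=\lambda\subseteq\lambda$ holds trivially, the hypotheses of Theorem \ref{th5} are satisfied with $\mu$ replaced by $\lambda$, so the theorem applies verbatim to the bispace $(C(X),\tau_{\lambda},\tau_{\lambda})$.

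The single point that requires comment is the identification of the two statements. On the covering side this is immediate: condition (2) of Theorem \ref{th5} reads ``each $F\in\mu$ is contained in an element of $\mathcal{V}_{n}$ for all but finitely many $n$'', and upon putting $\mu=\lambda$ this becomes word for word condition (2) of the corollary. On the function-space side I would invoke the remark made earlier in the paper that when $\tau_1=\tau_2=\tau$ the bitopological selective separability notions reduce to their monotopological counterparts; in particular $H_{(\tau_{\lambda},\tau_{\lambda})}$-separability of $(C(X),\tau_{\lambda},\tau_{\lambda})$ is precisely $H$-separability of the single space $C_{\lambda}(X)$. With these two identifications in place, statement (1) of Theorem \ref{th5} becomes statement (1) of the corollary and statement (2) becomes statement (2), so the equivalence transfers at once.

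I do not expect any genuine obstacle here, since the corollary is a pure specialization of an already-proved theorem. The only thing to be careful about is to confirm that the definition of $H_{(\tau_i,\tau_j)}$-separability really does collapse to ordinary $H$-separability when $i=j$ --- which is exactly the content of the sentence ``In case $\tau_1=\tau_2=\tau$, then these definitions coincide with definitions of corresponding topological selective versions of separability'' --- rather than re-running the argument of Theorem \ref{th5} from scratch.
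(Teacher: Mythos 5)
Your proposal is correct and is exactly how the paper obtains this corollary: it is stated without proof as the specialization $\mu=\lambda$ of Theorem \ref{th5}, with the bitopological notion collapsing to ordinary $H$-separability as noted in the paper's remark on the case $\tau_1=\tau_2$. No further argument is needed.
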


\begin{theorem}\label{th6.2}
If $(C(X),\tau_{\mu},\tau_{\lambda})$ is $GN_{(\tau_{\lambda},\tau_{\mu})}$ - separable, then $X$ satisfies
$S_{fin}(\Lambda(\lambda),\Lambda(\mu)^{gp})$.
\end{theorem}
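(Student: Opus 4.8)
The plan is to carry out the forward (function space $\to$ covers) half of the argument behind Theorem~\ref{th5}, but now producing a \emph{groupable} cover on the target side; it parallels Theorem~11 in~\cite{kooz}. Throughout I assume, as in the rest of the section, that $X$ is Tychonoff and $\mu\subseteq\lambda$. Start from a sequence $(\mathcal{U}_n:n\in\mathbb{N})$ of $\lambda$-$f$-covers of $X$ and, exactly as in the proof of Theorem~\ref{th2}, attach to each $\mathcal{U}_n$ the set
\[
D_n=\{f\in C(X):\ \exists\,U\in\mathcal{U}_n,\ f(X\setminus U)=\{1\}\}.
\]
First I would verify that every $D_n$ is dense in $C_\lambda(X)$. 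Given $f\in C(X)$ and a basic neighbourhood $\bigcap_{i\le m}[K_i,V_i]$ of $f$, set $K=\bigcup_{i\le m}K_i\in\lambda$ and use that $\mathcal{U}_n$ is a $\lambda$-$f$-cover to pick a cozero $U\in\mathcal{U}_n$ with $K\subseteq U$. Writing $X\setminus U=Z(\varphi)$ with $\varphi\ge 0$, the $C$-compactness of $K\subseteq\{\varphi>0\}$ gives $\varphi\ge\delta>0$ on $K$, so $\psi=\min(\varphi/\delta,1)$ is continuous with $\psi|_K=1$ and $\psi|_{X\setminus U}=0$; then $g=\psi f+(1-\psi)$ satisfies $g|_K=f|_K$ and $g(X\setminus U)=\{1\}$, whence $g\in D_n\cap\bigcap_{i\le m}[K_i,V_i]$.

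Next I would invoke the hypothesis, i.e.\ $S_1(\mathcal{D}_\lambda,\mathcal{D}^{gp}_\mu)$ applied to $(D_n)$, to obtain a selection $f_n\in D_n$ such that $\{f_n:n\in\mathbb{N}\}$ is a groupable dense subset of $C_\mu(X)$. For each $n$ fix $U_n\in\mathcal{U}_n$ with $f_n(X\setminus U_n)=\{1\}$ and put $\mathcal{V}_n=\{U_n\}$, a one-point (in particular finite) subset of $\mathcal{U}_n$. Reading the grouping of $\{f_n\}$ in its sequence form, I would fix a partition $\mathbb{N}=\bigcup_k I_k$ into finite blocks such that every nonempty $\tau_\mu$-open set $W$ satisfies $\{n:f_n\in W\}\cap I_k\ne\emptyset$ for all but finitely many $k$, and set $\mathcal{W}_k=\{U_n:n\in I_k\}$, a finite subfamily with $\bigcup_k\mathcal{W}_k=\{U_n:n\in\mathbb{N}\}=\bigcup_n\mathcal{V}_n$.

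It then remains to check that $\{U_n:n\in\mathbb{N}\}\in\Lambda(\mu)^{gp}$. Fix $F\in\mu$ and consider the $\tau_\mu$-neighbourhood $[F,(-1,1)]$ of $\textbf{0}$. By the block property there is, for all but finitely many $k$, an index $n\in I_k$ with $f_n\in[F,(-1,1)]$, i.e.\ $|f_n(x)|<1$ for every $x\in F$; since $f_n\equiv 1$ on $X\setminus U_n$, this forces $F\subseteq U_n\in\mathcal{W}_k$. Hence $F$ is contained in a member of $\mathcal{W}_k$ for all but finitely many $k$, and in particular $F$ is covered. As each $U_n\ne X$ (because $X\notin\mathcal{U}_n$), the family $\{U_n\}$ is a genuine $\mu$-$f$-cover, and the blocks $\mathcal{W}_k$ witness its groupability; thus $\bigcup_n\mathcal{V}_n\in\Lambda(\mu)^{gp}$, and $S_{fin}(\Lambda(\lambda),\Lambda(\mu)^{gp})$ holds (the selection is in fact of $S_1$-type, consistent with $GN$-separability being an $S_1$-principle).

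I expect the main obstacle to be the density step: the set $K$ is only $C$-compact, so it cannot be separated from the zero set $X\setminus U$ by ordinary normality, and one must instead exploit the cozero structure of $U$ (as above) to build $\psi$, and $K\in\lambda$ relies on $\lambda=\lambda_m$ being closed under finite unions. A secondary delicate point is the passage from the set-theoretic grouping of $\{f_n\}$ to the block form used above, which needs a little care because the selected functions need not be pairwise distinct.
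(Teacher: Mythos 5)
Your proposal follows essentially the same route as the paper: the same dense sets $D_n$ attached to the covers, the same application of $GN$-separability to select $f_n\in D_n$ with $\{f_n\}$ groupable, and the same translation of the grouping of the functions into a grouping of the corresponding cover elements via neighbourhoods $[F,(-1,1)]$. Your version is a bit more careful than the paper's on two points it leaves implicit (the density verification for merely $C$-compact $K$, and keeping the selection $\mathcal{V}_n\subseteq\mathcal{U}_n$ separate from the grouping blocks), but these are refinements, not a different argument.
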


\begin{proof}
Let $(\mathcal{U}_{n}: n\in \mathbb{N})$ be a sequence of
$\lambda$-covers of $X$. Now  we define sets $D_n$ in
$(C(X),\tau_{\lambda})$ as in the proof of Theorem $\ref{th5}$.
These sets are  $d$-dense in $C(X)$.  Apply the fact that  $C(X)$
is $\cal GN_{(\tau_{\lambda},\tau_{\mu})}$-separable, there are
$f_n\in D_n$, $n\in \mathbb{N}$, such that $D=\{f_n: n\in
\mathbb{N}\}$ is $\tau_{\mu}$-groupable, i.e. $D=\bigcup_{m\in
\mathbb{N}} G_{m}$, where each $G_m=\{f^{k_1}_{m},...,
f^{k_m}_{m}\}$ is a finite subset of $D$ and each
$\tau_{\mu}$-open set meets all but finitely many $G_m$. For each
$m\in \mathbb{N}$, let

$\mathcal{V}_m=\{U^{k_i}_m : f^{k_i}_m(X\setminus
U^{k_i}_m)=\{1\}, i\leq m \}$.

Let us show that  each $F\in \mu$ is contained in some $V\in
\mathcal{V}_m$ for all but finitely many $m$. Let $F\in \mu$. Then
the $\tau_{\mu}$-open set $[F,1]$ intersects $G_m$ for all $m$
bigger than some $m_0\in \mathbb{N}$. Let $f^{k_j}_{m}\in
[F,1]\bigcap G_m$, $m\geq m_0$. Then $F\subseteq U^{k_j}_{m}$,
$m\geq m_0$. It shows that  $X$ has
$S_{fin}((\Lambda(\lambda),\Lambda(\mu)^{gp})$.

\end{proof}

\begin{corollary}\label{th6.3}
If $C_{\lambda}(X)$ is $GN$ - separable, then $X$ satisfies
$S_{fin}(\Lambda(\lambda),\Lambda(\lambda)^{gp})$.
\end{corollary}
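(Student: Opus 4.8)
The plan is to read this off as the diagonal case $\mu=\lambda$ of Theorem~\ref{th6.2}, so that no genuinely new argument is required. First I would invoke the convention fixed in Section~2, namely that when the two topologies coincide the bitopological selective-separability notions collapse to their usual monotopological versions. In particular, $C_{\lambda}(X)$ being $GN$-separable is exactly the statement that the bispace $(C(X),\tau_{\lambda},\tau_{\lambda})$ is $GN_{(\tau_{\lambda},\tau_{\lambda})}$-separable, i.e.\ that $S_{1}(\mathcal{D}_{\lambda},\mathcal{D}_{\lambda}^{gp})$ holds with respect to the single topology $\tau_{\lambda}$.

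Next I would check that the standing hypotheses of Theorem~\ref{th6.2} survive the substitution $\mu:=\lambda$. By assumption $\lambda\in\Psi$, so $\mu=\lambda\in\Psi$ as well, the inclusion $\mu\subseteq\lambda$ is trivial, and $\Lambda(\mu)=\Lambda(\lambda)$ together with $\Lambda(\mu)^{gp}=\Lambda(\lambda)^{gp}$. Applying Theorem~\ref{th6.2} to $(C(X),\tau_{\lambda},\tau_{\lambda})$ then delivers that $X$ satisfies $S_{fin}(\Lambda(\lambda),\Lambda(\lambda)^{gp})$, which is precisely the assertion to be proved.

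Should a self-contained argument be preferred, I would simply re-run the proof of Theorem~\ref{th6.2} with every occurrence of $\mu$ replaced by $\lambda$: begin with a sequence $(\mathcal{U}_{n}:n\in\mathbb{N})$ of $\lambda$-covers, build the dense sets $D_{n}\subseteq C(X)$ as in the proof of Theorem~\ref{th5}, use $GN$-separability to extract functions $f_{n}\in D_{n}$ whose collection $\{f_{n}:n\in\mathbb{N}\}$ is $\tau_{\lambda}$-groupable dense, split it into the finite blocks $G_{m}$, and read off the finite cover families $\mathcal{V}_{m}$ as there. Since nothing in that proof exploits $\mu\neq\lambda$, there is no real obstacle here; the only thing genuinely worth recording is the identification of the monotopological and bitopological definitions, which is exactly what legitimizes the specialization.
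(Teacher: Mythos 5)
Your proposal is correct and matches the paper's own (implicit) treatment: the corollary is stated without separate proof precisely because it is the specialization $\mu=\lambda$ of Theorem~\ref{th6.2}, using the convention from Section~2 that the bitopological separability notions reduce to the usual ones when the two topologies coincide. The optional self-contained re-run you sketch is the same argument as the paper's proof of Theorem~\ref{th6.2} with $\mu$ replaced by $\lambda$, so nothing new is needed.
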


From Theorem $\ref{th4.9}$, Theorem $\ref{th6.2}$ and Corollary
$\ref{th6.3}$ we obtain

\begin{theorem}
If $(C(X),\tau_{\mu},\tau_{\lambda})$ is $GN_{(\tau_{\lambda},\tau_{\mu})}$ - separable, then $X$ satisfies
$S_{1}(\Lambda(\lambda),\Lambda(\mu))$ and
$S_{fin}(\Lambda(\lambda),\Lambda(\mu)^{gp})$.
\end{theorem}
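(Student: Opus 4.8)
The plan is to establish the two selection properties separately, treating $S_{fin}(\Lambda(\lambda),\Lambda(\mu)^{gp})$ as immediate and concentrating the work on $S_{1}(\Lambda(\lambda),\Lambda(\mu))$. The second conjunct is precisely the conclusion of Theorem~\ref{th6.2}: under the hypothesis that $(C(X),\tau_{\mu},\tau_{\lambda})$ is $GN_{(\tau_{\lambda},\tau_{\mu})}$-separable it holds verbatim, so nothing further is needed for it.

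For the first conjunct I would reuse the construction from the proof of Theorem~\ref{th6.2} but retain only the density of the selected family, discarding the grouping. Given a sequence $(\mathcal{U}_{n}:n\in\mathbb{N})$ of $\lambda$-$f$-covers, let $D_{n}$ be the set of all $f\in C(X)$ for which $f(X\setminus U)=\{1\}$ holds for some $U\in\mathcal{U}_{n}$. As in the proofs of Theorems~\ref{th2}, \ref{th5} and \ref{th6.2}, each $D_{n}$ is dense in $(C(X),\tau_{\lambda})$: given $f$ and a basic neighbourhood $\bigcap_{i\le m}[K_{i},V_{i}]$, the set $K=\bigcup_{i\le m}K_{i}$ lies in $\lambda$, some $U\in\mathcal{U}_{n}$ contains $K$, and one extends to $g\in D_{n}$ that agrees with $f$ on $K$ and equals $1$ off $U$. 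Since $GN_{(\tau_{\lambda},\tau_{\mu})}$-separability is the principle $S_{1}(\mathcal{D}_{\lambda},\mathcal{D}_{\mu}^{gp})$, it selects single functions $f_{n}\in D_{n}$ whose range $\{f_{n}:n\in\mathbb{N}\}$ is $\tau_{\mu}$-groupable, hence in particular $\tau_{\mu}$-dense.

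Each $f_{n}$ singles out one $U_{n}\in\mathcal{U}_{n}$ with $f_{n}(X\setminus U_{n})=\{1\}$, and since $\mathcal{U}_{n}$ is a $\lambda$-$f$-cover we have $U_{n}\neq X$. I claim $\{U_{n}:n\in\mathbb{N}\}\in\Lambda(\mu)$. Indeed, for $F\in\mu$ the $\tau_{\mu}$-neighbourhood $[F,(-1,1)]$ meets $\{f_{n}:n\in\mathbb{N}\}$ by density, so some $f_{n}$ satisfies $f_{n}(F)\subseteq(-1,1)$; as every $x\notin U_{n}$ has $f_{n}(x)=1$, this forces $F\subseteq U_{n}$. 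Thus the single-valued assignment $n\mapsto U_{n}$ witnesses $S_{1}(\Lambda(\lambda),\Lambda(\mu))$. By Theorem~\ref{th4.9} the property $S_{1}(\Lambda(\lambda),\Lambda(\mu))$ just obtained is equivalent to $C(X)$ having countable $(\tau_{\lambda},\tau_{\mu})$-strong fan tightness, which is the role played by that theorem in the combination; together with Theorem~\ref{th6.2} (and its monotopological counterpart Corollary~\ref{th6.3}) this yields both assertions.

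The main obstacle is confirming that the $S_{1}$-type selection genuinely produces one $U_{n}$ per cover that nevertheless assembles into a $\mu$-$f$-cover, the delicate points being the $\tau_{\lambda}$-density of each $D_{n}$ (which rests on extending a continuous function matching $f$ on the compact set $K$ while being identically $1$ outside $U$) and the bookkeeping that guarantees $U_{n}\neq X$. Both are the same routine verifications already performed in the proofs of Theorems~\ref{th2} and \ref{th5}, so the only conceptual content is separating the density part of $GN$-separability from its groupability part.
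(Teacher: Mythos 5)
Your proposal is correct, and it is in fact more complete than what the paper records: the paper's entire proof is the one-line citation ``From Theorem \ref{th4.9}, Theorem \ref{th6.2} and Corollary \ref{th6.3} we obtain,'' which leaves unexplained how $GN_{(\tau_{\lambda},\tau_{\mu})}$-separability feeds into Theorem \ref{th4.9} (that theorem converts countable $(\tau_{\lambda},\tau_{\mu})$-strong fan tightness into $S_{1}(\Lambda(\lambda),\Lambda(\mu))$, but the paper never explicitly shows that $GN$-separability implies the fan-tightness hypothesis). You handle the second conjunct exactly as the paper does, by quoting Theorem \ref{th6.2}, and for the first conjunct you bypass Theorem \ref{th4.9} entirely with a direct cover argument: build the dense sets $D_{n}$, use that $GN$-separability is an $S_{1}$-selection landing in groupable (hence dense) sets, and read off one $U_{n}$ per cover. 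This is precisely the $(2)\Rightarrow(1)$ argument of Theorem \ref{th2} specialized through the implication $GN\Rightarrow R$, and it has the added virtue of not invoking the separably $\lambda$-submetrizable hypothesis that Theorem \ref{th2} carries (and which the present theorem does not assume). The only caveats are ones you inherit from the paper itself: the density check tacitly uses that $K=\bigcup_{i\le m}K_{i}$ again belongs to $\lambda$ (closure of $\lambda$ under finite unions) and that $K$ and the zero set $X\setminus U$ are completely separated so that the extension $g$ exists; both are exactly as in the proofs of Theorems \ref{th2} and \ref{th5}, so your argument stands on the same footing as the paper's.
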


\begin{corollary}
If $(C(X),\tau_{\mu},\tau_{\lambda})$ is $GN_{(\tau_{\lambda},\tau_{\mu})}$ - separable, then
$(C(X),\tau_{\mu},\tau_{\lambda})$ is
$R_{(\tau_{\lambda},\tau_{\mu})}$ - separable as well as
$H_{(\tau_{\lambda},\tau_{\mu})}$ - separable.
\end{corollary}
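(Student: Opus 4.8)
The plan is to establish the two conclusions separately, in each case feeding one of the two outputs of the theorem immediately preceding this corollary into a characterization already proved in the paper. That theorem tells us that $GN_{(\tau_{\lambda},\tau_{\mu})}$-separability of $(C(X),\tau_{\mu},\tau_{\lambda})$ forces $X$ to satisfy \emph{both} $S_{1}(\Lambda(\lambda),\Lambda(\mu))$ and $S_{fin}(\Lambda(\lambda),\Lambda(\mu)^{gp})$, so nothing new about the space $X$ has to be extracted; the work is entirely in translating these two cover-selection properties back into separability statements about $C(X)$. I keep the standing hypotheses of this section, so $X$ is Tychonoff and separably $\lambda$-submetrizable and $\mu\subseteq\lambda$; these are exactly the assumptions under which Theorem~\ref{th2} and Theorem~\ref{th5} were stated.

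For the $R$-part I would take the first output, $X\in S_{1}(\Lambda(\lambda),\Lambda(\mu))$, and apply the implication $(1)\Rightarrow(2)$ of Theorem~\ref{th2}; this immediately yields that $(C(X),\tau_{\mu},\tau_{\lambda})$ is $R_{(\tau_{\lambda},\tau_{\mu})}$-separable. It is worth recording that this half also has a one-line soft proof independent of submetrizability: since every groupable dense set is in particular dense, one has $\mathcal{D}^{gp}_{\mu}\subseteq\mathcal{D}_{\mu}$, so any choice function witnessing $S_{1}(\mathcal{D}_{\lambda},\mathcal{D}^{gp}_{\mu})$ (that is, $GN$-separability) already witnesses $S_{1}(\mathcal{D}_{\lambda},\mathcal{D}_{\mu})$ (that is, $R$-separability). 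This is precisely the implication $GN_{\tau_i,\tau_j}\Rightarrow R_{\tau_i,\tau_j}$ recorded in the introduction.

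For the $H$-part I would take the second output, $X\in S_{fin}(\Lambda(\lambda),\Lambda(\mu)^{gp})$, and observe that this property is exactly condition~(2) of Theorem~\ref{th5}: a sequence of $\lambda$-covers is sent to finite subfamilies whose union is a groupable $\mu$-$f$-cover, i.e. every $F\in\mu$ lies in a chosen member for all but finitely many indices. Applying $(2)\Rightarrow(1)$ of Theorem~\ref{th5} then gives that $(C(X),\tau_{\mu},\tau_{\lambda})$ is $H_{(\tau_{\lambda},\tau_{\mu})}$-separable, completing the proof. The step I expect to be the real obstacle is precisely this identification: the selection produced by the preceding theorem (through Theorem~\ref{th6.2}) indexes its finite subfamilies $\mathcal{V}_m$ by the \emph{groups} $G_m$ of the groupable witness rather than by the original covers $\mathcal{U}_n$, whereas Theorem~\ref{th5}(2) demands subfamilies tied to the covers as $\mathcal{V}_n\subseteq\mathcal{U}_n$. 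Reconciling the existential grouping coming out of groupability with the index-wise grouping required by the $H$-characterization is the one bookkeeping point that must be carried out with care; once the two formulations are matched, the passage $S_{1}\Rightarrow S_{fin}$ underlying $GN\Rightarrow H$ goes through.
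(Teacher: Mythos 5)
Your overall architecture is the one the paper intends: the corollary is printed without proof immediately after the theorem asserting that $GN_{(\tau_{\lambda},\tau_{\mu})}$-separability yields $X\in S_{1}(\Lambda(\lambda),\Lambda(\mu))$ and $X\in S_{fin}(\Lambda(\lambda),\Lambda(\mu)^{gp})$, and feeding these back through Theorem~\ref{th2} and Theorem~\ref{th5} is clearly the intended derivation. The $R$-half of your argument is complete; in fact your one-line observation that $\mathcal{D}^{gp}_{\mu}\subseteq\mathcal{D}_{\mu}$, so any selector witnessing $S_{1}(\mathcal{D}_{\lambda},\mathcal{D}^{gp}_{\mu})$ already witnesses $S_{1}(\mathcal{D}_{\lambda},\mathcal{D}_{\mu})$, is cleaner than the detour through Theorem~\ref{th2} and requires no hypotheses on $X$ at all.

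The $H$-half, however, stops exactly at the point you yourself flag, and that point is a genuine gap rather than bookkeeping one can wave at: $S_{fin}(\Lambda(\lambda),\Lambda(\mu)^{gp})$ is \emph{not} literally condition (2) of Theorem~\ref{th5}. Groupability of $\bigcup_{n}\mathcal{V}_{n}$ only provides \emph{some} partition into finite blocks $\mathcal{W}_{m}$ with no control over which original cover $\mathcal{U}_{n}$ the members of a given block came from, whereas Theorem~\ref{th5}(2) demands blocks tied to the covers as $\mathcal{V}_{n}\subseteq\mathcal{U}_{n}$; asserting that the two formulations "can be matched" is not a proof of the matching. The gap is closable by the standard common-refinement trick: replace $\mathcal{U}_{n}$ by $\mathcal{U}'_{n}=\{U_{1}\cap\dots\cap U_{n}: U_{i}\in\mathcal{U}_{i}\}$, which is again a $\lambda$-$f$-cover (cozero sets are closed under finite intersections, each $A\in\lambda$ lies in some member of every $\mathcal{U}_{i}$, and $U_{1}\cap\dots\cap U_{n}\subseteq U_{1}\neq X$); apply $S_{fin}(\Lambda(\lambda),\Lambda(\mu)^{gp})$ to $(\mathcal{U}'_{n})$ to get finite $\mathcal{V}'_{n}\subseteq\mathcal{U}'_{n}$ whose union is grouped into pairwise disjoint finite blocks $\mathcal{W}_{m}$. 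Since each $\mathcal{V}'_{n}$ is finite and the blocks are disjoint, for every $k$ all but finitely many blocks are contained in $\bigcup_{n\geq k}\mathcal{V}'_{n}$, so one can choose $m_{1}<m_{2}<\dots$ such that every $W\in\mathcal{W}_{m_{k}}$ has a fixed representation $U_{1}\cap\dots\cap U_{n}$ with $n\geq k$; setting $\mathcal{V}_{k}=\{U^{W}_{k}: W\in\mathcal{W}_{m_{k}}\}\subseteq\mathcal{U}_{k}$ and noting $F\subseteq W\subseteq U^{W}_{k}$ for all but finitely many $k$ gives exactly condition (2) of Theorem~\ref{th5}, after which your appeal to $(2)\Rightarrow(1)$ finishes the argument. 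Finally, you are right to keep the standing hypothesis that $X$ is Tychonoff and separably $\lambda$-submetrizable: both implications you invoke use separability of $C_{\lambda}(X)$, and $GN_{(\tau_{\lambda},\tau_{\mu})}$-separability by itself only produces a countable $\tau_{\mu}$-dense set, so the corollary as printed should be read as carrying that assumption.
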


\begin{corollary}
If $C_{\lambda}(X)$ is $GN$ - separable, then
$C_{\lambda}(X)$ is $R$ - separable as well as $H$ - separable.
\end{corollary}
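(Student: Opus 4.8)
The plan is to read this off the preceding corollary by taking $\mu=\lambda$. The standing requirement $\mu\subseteq\lambda$ is then automatic, and the bispace $(C(X),\tau_\mu,\tau_\lambda)$ degenerates to the single space $C_\lambda(X)$ carrying the one topology $\tau_\lambda$. As recorded in Section 2, when the two topologies coincide the bitopological selective-separability properties reduce to their classical one-topology versions; concretely $GN_{(\tau_\lambda,\tau_\lambda)}$-, $R_{(\tau_\lambda,\tau_\lambda)}$- and $H_{(\tau_\lambda,\tau_\lambda)}$-separability become exactly $GN$-, $R$- and $H$-separability of $C_\lambda(X)$. Under these identifications the preceding corollary is verbatim the assertion we want, so at heart the proof is just the substitution $\mu=\lambda$.

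If instead one wishes to argue directly, I would split the conclusion into its two halves. The $R$-part costs nothing: $GN$-separability implies $R$-separability already by the basic chain of implications quoted in Section 2 (namely $GN\Rightarrow R\Rightarrow M$), applied with $\tau_1=\tau_2=\tau_\lambda$. For the $H$-part I would first note that $GN$-separability entails separability of $C_\lambda(X)$ (a selectively separable space is in particular separable), so by Theorem \ref{th5.2} the base space $X$ is automatically separably $\lambda$-submetrizable and Corollary \ref{th50} becomes applicable. Corollary \ref{th6.3} then gives that $X$ satisfies $S_{fin}(\Lambda(\lambda),\Lambda(\lambda)^{gp})$; unwinding the definition of a groupable $\lambda$-$f$-cover, any witnessing sequence of finite subfamilies $\mathcal V_n\subseteq\mathcal U_n$ has the property that each $F\in\lambda$ lies in a member of $\mathcal V_n$ for all but finitely many $n$, which is precisely condition (2) of Corollary \ref{th50}. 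Hence $C_\lambda(X)$ is $H$-separable.

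Since the statement is a direct specialization of the corollary above it, there is no genuine obstacle. The only point needing a moment's care is the bookkeeping that turns a groupable selection into the ``eventually contained in some $\mathcal V_n$'' formulation, i.e.\ matching $S_{fin}(\Lambda(\lambda),\Lambda(\lambda)^{gp})$ against condition (2) of Corollary \ref{th50}; this is the same translation already carried out inside the proof of Theorem \ref{th5}, so it introduces nothing new.
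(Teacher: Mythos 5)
Your proposal is correct and matches the paper's derivation: the corollary is obtained exactly by specializing the preceding bitopological corollary to $\mu=\lambda$, under which the $GN_{(\tau_\lambda,\tau_\lambda)}$-, $R_{(\tau_\lambda,\tau_\lambda)}$- and $H_{(\tau_\lambda,\tau_\lambda)}$-separability notions collapse to their classical one-topology versions, as the paper notes in Section 2. Your supplementary direct argument is consistent with the paper's own chain (Theorem \ref{th4.9}, Theorem \ref{th6.2}, Corollaries \ref{th6.3} and \ref{th50}), with only the already-acknowledged regrouping step translating $S_{fin}(\Lambda(\lambda),\Lambda(\lambda)^{gp})$ into condition (2) of Corollary \ref{th50}.
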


\section{Examples}

We consider some examples which separating different type of
separability in function bispaces.

\begin{example} Let $\mathbb{I}=[0,1]\subset \mathbb{R}$.

\smallskip
$\bullet$ By Example 2.14 in ~{\cite{bbmt}}, $C_p(\mathbb{I})$ is
$M$-separable i.e. $\mathbb{I}$ has the property
$S_{fin}(\Lambda(p),\Lambda(p))$ by Corollary \ref{cor1}.

\smallskip
Since each $k$-cover of $\mathbb{I}$ is $\omega$-cover  we have
that $\mathbb{I}\in$ $S_{fin}(\Lambda(k),\Lambda(p))$. Hence the
space $(C(\mathbb{I}),\tau_p, \tau_k)$ is
$M_{(\tau_{k},\tau_{p})}$ - separable.

\smallskip
$\bullet$  By Proposition 61 in ~{\cite{bbm}}, $C_p(\mathbb{I})$
is not  $R$-separable, and, by Fact 2.1 in ~{\cite{kooz}}, space
$(C(\mathbb{I}),\tau_p, \tau_k)$ is not $R_{(\tau_{k},\tau_{p})}$ -
separable.

\smallskip
It follows that the bitopological space $(C(\mathbb{I}),\tau_p,
\tau_k)$ is $M_{(\tau_{k},\tau_{p})}$ - separable, but it is not
$R_{(\tau_{k},\tau_{p})}$ - separable.
\end{example}

\begin{example} Let $X=\omega^{\omega}$. Then $X$ is
$p$-Lindel$\ddot{o}$f space, but $C_p(X)$ is not  $M$-separable.
\end{example}
Recall that $\mathfrak{b}$ denote the minimum of cardinality of an
unbounded set in $\omega^{\omega}$~{\cite{do}}.

\begin{example} Let $X$ be an uncountable, second countable space of cardinality less than $\mathfrak{b}$. By Corollary 43 in ~{\cite{bbm}},
$C_p(X)$ is $H$-separable, but $C_p(X)$ is not $R$-separable,
and, hence, $C_p(X)$ is not  $GN$-separable.
\end{example}

\noindent
{\bf Question 6.4}. Does there exist an $X$ such that
$(C(X),\tau_{\mu},\tau_{\lambda})$ is
$R_{(\tau_{\lambda},\tau_{\mu})}$ - separable and $H_{(\tau_{\lambda},\tau_{\mu})}$ - separable, but it isn't $GN_{(\tau_{\lambda},\tau_{\mu})}$- separable for some $\lambda$,
$\mu \in \Psi$ and $\mu\subseteq \lambda$ ( for $\lambda=k$ and
$\mu=p$ )?

\bibliographystyle{model1a-num-names}
\bibliography{<your-bib-database>}



\begin {thebibliography}{}

\bibitem{ad}
R. Arens, J. Dugundji, \textit{Topologies for  function spaces},
Pasific J.Math. 1 (1951), 5-31.

\bibitem{arh}
A.V. Arhangel'skii, \textit{Topological function spaces}, Kluwer
Academic Publishers, (1992).

\bibitem{bbm}
A. Bella, M. Bonanzinga, M. Matveev, \textit{Variations of
selective separability}, Topology Appl. 156 (2009), 1241-1252.

\bibitem{bbmt}
A. Bella, M. Bonanzinga, M. Matveev, V. Tkachuk, \textit{Selective
separability: general facts and behavior in countable spaces},
Topology Proc. 32 (2008), 15--30.

\bibitem{bbms}
A. Bella, M. Matveev, S. Spadaro, \textit{ Variations of selective
separability II: Discrete sets and the influence of convergence
and maximality}, Topology Appl. 159 (1) (2012), 253--271.

\bibitem{tsaban2}
M. Bonanzinga, F. Cammaroto, B. Pansera, B. Tsaban,
 \textit{Diagonalizations of dense families}, Topology Appl. 165 (2014), 12--25.

\bibitem{do}
E.K. van Douwen, \textit{Integers in topology, in: K. Kunen,
J.E.Vaughan (Eds.). Handbook of Set-Theoretic Topology}, Elsevier
Sci. Pub. B.V. (1984), 111--168.

\bibitem{enge}
R. Engelking, \textit{General Topology}, PWN, Warsaw, (1977); Mir,
Moscow, (1986).

\bibitem{mkm}
G. Di Maio, Lj.D.R. Ko$\check{c}$inac, E. Meccairiello,
\textit{Selection principles and hyperspce topologies}, Topology
Appl. 153 (2005), 912--923.

\bibitem{gs}
G. Grunhage, M.Sakai, \textit{Selective separability and its
variations}, Topology Appl. 158 (12) (2011), 1352--1359.

\bibitem {hur}
W. Hurewicz, \textit{ \"Uber die Verallgemeinerung des Borelschen}
Theorems, Math. Z. 24 (1925), 401-–425.

\bibitem{ko}
Lj.D.R. Ko$\check{c}$inac, \textit{Closure properties of function spaces},
Appl.Gen.Top. 4 (2003), 255--261.

\bibitem{ko1}
Lj.D.R. Ko$\check{c}$inac, \textit{Selected results on selection
principles},In: Proc. Third Seminar Geom. Topol. (July 15-17,
2004,Tabriz, Iran), 105--109.

\bibitem{kooz}
Lj.D.R. Ko$\check{c}$inac, S. $\ddot{O}$z\c{c}a$\breve{g}$,
\textit{Versions of separability in bitopological spaces},
Topology Appl. 158 (2011), 1471--1477.

\bibitem{kooz1}
Lj.D.R. Ko$\check{c}$inac, S. $\ddot{O}$z\c{c}a$\breve{g}$,
\textit{Bitopological spaces and selection principles}, Cambridge
Scientific Publishers (2012), 243--255.

\bibitem{menger}
K. Menger, \textit{Einige \"Uberdeckungss\"atze der
Punktmengenlehre}, Sitzungsberichte Abt. 2a, Mathematik,
Astronomie, Physik, Meteorologie und Mechanik (Wiener Akademie,
Wien) 133 (1924), 421--444.

\bibitem{nb}
N.Noble, \textit{The density character of function spaces}, Proc.
Amer. Math. Soc. 42:1 (1974), 228--233.

\bibitem{uch}
 I. Juh$\acute{a}$sz, \textit{ Variations on tightness}, Studia Sci. Math. Hangar.
24 (1989), 179–-186.

\bibitem{os2}
A.V. Osipov, \textit{ Topological-algebraic properties of function
spaces with set-open topologies}, Topology Appl. 159(3) (2012),
800-805.

\bibitem{os1}
A.V. Osipov, \textit{ The Set-Open topology}, Top. Proc. 37
(2011), 205-217.

\bibitem{os3}
A.V. Osipov, \textit{On the completeness properties of the
$C$-compact-open topology on $C(X)$}, Ural Math. Journal, Vol. 1
(2015), 61-67.

\bibitem{panpa}
B.A. Pansera, V. Pavlovi$\acute{c}$, \textit{Open covers and
function spaces}, Math.Vesnik  58 (2006), 57--70.

\bibitem{pav}
V. Pavlovi$\acute{c}$, \textit{A selective bitopological version
of the Reznichenko property in function spaces}, Topology Appl.
156 (2009), 1636--1645.

\bibitem {roth}
F. Rothberger, \textit{ Eine Verscharfung der Eigenschaft {\sf
C}}, Fund. Math. 30 (1938), 50-55.

\bibitem{masa}
M. Sakai, \textit{Variations on tightness in function spaces},
Topology Appl. 101 (2000), 273--280.

\bibitem {sc}
M. Scheepers, \textit{Combinatorics of open covers (I): Ramsey
Theory}, Topology Appl.  69 (1996), 31--62.

\bibitem{sch}
M. Scheepers, \textit{Combinatorics of open covers (VI): Selectors
for sequences of dense sets}, Quaest. Math. 22 (1999), 109--130.

\bibitem{sc1}
M. Scheepers, \textit{Selection principles and covering properties
in topology} Note Mat. 22 (2) (2003/2004), 3--41.

\bibitem{tsaban}
B. Tsaban, \textit{Some new directions in infinite-combinatorial
topology}, In: Set Theory (J.Bagaria and S. Todorcevic, eds.)
Trends in Mathematics, Birkhauser, (2006), 225--255.

\end{thebibliography}





\end{document}